\numberwithin{equation}{section}
\newtheorem{thm}{Theorem}[section]
\newtheorem{cor}[thm]{Corollary}
\newtheorem{lem}[thm]{Lemma}
\theoremstyle{definition}
\newtheorem{defn}[thm]{Definition}
\newtheorem{rem}[thm]{Remark}
 \newcommand\R{\mathbb{R}}
 \newcommand\E{\mathbb{E}}
 \DeclareMathOperator*{\var}{Var}
\begin{document}

%------
% Insert the title of your paper and (if necessary)
% a short title for the running head.
%------
\title{On the uniform Besov regularity of local times of general processes}
\titlemark{Besov regularity of local times}

%------
%%%% Pls fill in all fields for each author
%%%% Label the authors by their position in the authors' list using {}
%%%% If you published any math paper ever, you have an MR Author ID.
%  Please look it up in three easy (and free) steps:
% 1. copy the bibliographic data of any published paper (co-)authored by you in the search field at https://mathscinet.ams.org/mathscinet/freetools/mref
% 2. Hit your name in the search result
% 3. Find your MR Author ID in the first row, copy it in the \mrid{} field
%%%% If you have not created your ORCID yet, you may like to do it now, pls copy it in the field \orcid{}
%%%% Abbreviate first names for the running head

\emsauthor{1}{
	\givenname{Brahim}
	\surname{Boufoussi}
	\mrid{}
	\orcid{}}{B.~Boufoussi}
%%%% Repeat the same fields for each numbered author
\emsauthor{2}{
	\givenname{Yassine}
	\surname{Nachit}
	\mrid{}
	\orcid{}}{Y.~Nachit}
%\emsauthor{3}{
%	\givenname{Someone}
%	\surname{Else}
%	\mrid{}
%	\orcid{}}{S.~Else}

%%%% Please provide detailed address info for each author
%%%% Use the same numbering as for \emsauthor above
%%%% Please look up the ROR ID of your institute here: https://ror.org
\Emsaffil{1}{
	\department{Department of Mathematics}
	\organisation{Faculty of Sciences Semlalia, Cadi Ayyad University}
	\rorid{}
	\address{2390}
	\zip{40000}
	\city{Marrakesh}
	\country{Morocco}
	\affemail{boufoussi@uca.ac.ma}}
%%%% Repeat the same fields for each numbered author
%%%% If some author has multiple affiliations, repeat the fields for each affiliation
%%%% Number the affiliations using {}
\Emsaffil{2}{
	\department{1}{Department of Mathematics}
	\organisation{1}{CNRS, Université de Lille}
	\rorid{1}{}
	\address{1}{Laboratoire Paul Painlevé UMR 8524}
	\zip{1}{F-59655}
	\city{1}{Villeneuve d’Ascq}
	\country{1}{France}
	\affemail{yassine.nachit@univ-lille.fr}}

%------
% Add MSC 2020 codes according to https://zbmath.org/classification/.
% A unique primary MSC code (in curly brackets) is mandatory,
% while secondary MSC codes (in square brackets) are optional.
%------
\classification[]{60G17, 60J55, 30H25}

%------
% Add a list of keywords.
%------
\keywords{Local times, Besov spaces, Sample path properties, Fourier  analysis, Adler's theorem}

%------
% Insert your abstract.
%------
\begin{abstract}
  The $\alpha$-local nondeterminism notion ($\alpha$-LND, for short) has been introduced by \cite{boufoussi2021local} to investigate the existence, joint continuity, and uniform  H\"{o}lder continuity, i.e., H\"{o}lder continuity in the time variable $t$ uniformly in the
  space variable $x$,  for local times $L(x, t)$  of general processes. In the present paper, we aim to use the $\alpha$-LND property  to improve this uniform H\"{o}lder continuity of local times to a uniform Besov regularity, i.e., Besov regularity in the time variable $t$ uniformly in the space variable $x$ and in $p$ (the index of the Besov space $\mathbf{B}^{\nu}_{p,\infty}(I;\R)$).
  %expand upon the previous work by investigating the Besov regularity, in the time variable $t$ uniformly in the space variable $x$ and in $p$ (the index of the Besov space $\mathbf{B}^{\nu}_{p,\infty}(I;\R)$), for local times $L(x, t)$ of continuous general processes.
  %Although a large class of Gaussian processes possesses the property of $\alpha$-LND, this concept has been specifically designed for non-Gaussian processes. By leveraging the ��-LND property, we aim to expand upon previous research in this area.
  %By leveraging the $\alpha$-LND property, we aim in this study to expand upon the previous work by investigating the Besov regularity, in the time variable $t$ uniformly in the space variable $x$ and in $p$ (the index of the Besov space $\mathbf{B}^{\nu}_{p,\infty}(I;\R)$), for local times $L(x, t)$ of continuous general processes.
  %Our main purpose is to go further by taking advantage of the $\alpha$-LND property in order to
  The Besov regularity of local times, in the time variable $t$ for fixed space variable $x$, has never been treated in the literature even for Gaussian or stable processes.
  We also extend the classical Adler's theorem \cite[Theorem 8.7.1]{adler1981geometry} to the Besov spaces case.
  These results are then exploited to study the uniform (in $p$) Besov irregularity of the sample paths of the underlying processes.
  As applications, we get sharp uniform Besov irregularity results for a class of Gaussian processes and the solutions of systems of non-linear stochastic heat equations. The uniform Besov regularity of their corresponding local times is also obtained.
  
\end{abstract}

\maketitle

%------
\section{Introduction and main results}
The local times of $d$-dimensional paths have gotten much interest during the last few decades by the analytical and probabilistic communities. The occupation measure basically measures the amount of time the path spends in a given set. The local time is defined as the Radon-Nikodym derivative
of the occupation measure with respect to the Lebesgue measure.
It is of importance in both theory and applications to investigate sample path properties of stochastic processes. One way of studying the irregularity properties of the sample paths of stochastic processes is by analyzing the smoothness of their local times. S. Berman has initiated this approach  in a series of papers \cite{Berman69a, Berman69b, Berman72} by using Fourier analytic methods to Gaussian processes. Furthermore, he has introduced the concept of local nondeterminism (LND) for Gaussian processes to investigate the existence of jointly continuous local times.
Since then, there have been a wide variety of extensions of the notion of local nondeterminism for Gaussian and stable processes, e.g. \cite{pitt1978local, cuzick1982joint, nolan1989local}. In the Gaussian case, the exponential form  of the characteristic function allows expressing the LND property in terms of a condition on the variance. Nevertheless, the unknown form of the characteristic functions of general processes leads to difficulties in extending the LND condition beyond the Gaussian framework. Consequently, the LND property used in the Gaussian context should be replaced, for general processes, by fine estimations on the characteristic function of the increments.
Recently, based on the conditional Malliavin calculus, Lou and Ouyang \cite{lou2017local} have established an upper bound of Gaussian type for
the partial derivatives of the $n$-point joint density of the solution to a stochastic
differential equation driven by fractional Brownian motion. They have used this result as
an alternative to the LND condition. Due to this, the authors in
\cite{lou2017local} have shown the existence and regularity of the local times of stochastic
differential equations driven by fractional Brownian motions. In \cite{boufoussi2021local}, a new condition, called $\alpha$-local nondeterminism ($\alpha$-LND for short), has been introduced to investigate the existence and joint regularity of the local times of the solutions to systems of non-linear stochastic heat equations---which are neither Gaussian nor stable processes. Although the $\alpha$-LND condition has been specifically designed  for non-Gaussian processes, in a Gaussian setting,  the $\alpha$-LND property can be seen as a weaker condition than the classical LND (see Remark \ref{rem alphha LND}(ii)).  Generally, the proof of the $\alpha$-LND condition relies on the technique of integration by parts derived from the Malliavin calculus.
 Roughly speaking, we believe that the approach presented in \cite{boufoussi2021local} can be used to establish the $\alpha$-LND for a class of adapted stochastic processes that are smooth in the Malliavin sense.

Consider $(X_t)_{t\in [0,1]}$ an $\R^d$-valued continuous stochastic process, such that  $X(0) = 0$. Assume that $X$ satisfies the $\alpha$-LND with $\alpha\in (0,1)$, see Definition \ref{alphha LND}.  Let us state the following hypothesis on $X$:\\
\textbf{H}\; There exists $p_0>\frac{1}{\alpha}$ and $K>0$ such that for all $0\leq s\leq t\leq 1$,
\begin{equation}\label{moment leq}
    \E[\|X_t-X_s\|^{p_0}]\leq K |t-s|^{p_0\alpha}.
  \end{equation}
One can get by the same calculations as in \cite{boufoussi2021local} (see \cite[Remark 5.6]{boufoussi2021local}) that, when $\alpha<\frac{1}{d}$, $X$ has a jointly continuous version of the local time, $L(x,t)$, satisfying almost surely a  H\"{o}lder condition of order $\gamma<1-d\alpha$, in the time variable $t$ uniformly in the space variable $x$. One of the main aims of this article is to improve this uniform H\"{o}lder continuity of $L(x,\bullet)$ to regularity in the Besov spaces---we refer to  Subsection \ref{subsection1} for some notions on Besov spaces.
There are well-known Besov regularity results, in the space variable $x$
for fixed $t$, of the local times $L(x, t)$ of some classical Gaussian and stable processes, e.g. \cite{boufoussi1993temps, boufoussi1999regularite}. However, to the best of our knowledge, there is no work in the literature treating the Besov regularity of $L(x,\bullet)$ even for the Gaussian or stable processes.
To fill this gap, we  investigate the  Besov regularity, in the time variable $t$ uniformly in the space variable $x$ and in $p$ (the index of the Besov space $\mathbf{B}^{\nu}_{p,\infty}(I;\R)$), for local times of continuous general processes that satisfy the $\alpha$-LND condition. Our first main result is:
\begin{thm}\label{thm Besov regularity of L}
  Let $X=(X_t)_{t\in [0,1]}$ be an $\R^d$-valued continuous stochastic process which is $\alpha$-LND with $\alpha\in (0,\frac{1}{d})$. Assume also that  $X$ verifies \normalfont{\textbf{H}}. Denote by $L(x,t)$ the jointly continuous version of the local time of $X$. Then, almost surely, for any $1\leq p<\infty$,
  \begin{equation}\label{modulus of continuity of L unif x}
    \sup_{0<t\leq 1}t^{-(1-d\alpha)}\sup_{|h|\leq t}\|r\mapsto \sup_{x\in \R^d}|L(x,r+h)-L(x,r)|\|_{L^p(I(h);\R)}<\infty.
  \end{equation}
In particular, we have the following Besov regularity
  \begin{equation}\label{Besov L}
    \mathbb{P}\left[L(x,\bullet)\in \mathbf{B}^{1-d\alpha}_{p,\infty}(I;\R)\,,\;\text{ for all } x\in \R^d \text{ and } p\in [1,\infty)\right]=1,
  \end{equation}
   where $I=[0,1]$, $I(h)=\{x\in I\,;\;x+h\in I\}$, and $L(x,\bullet):t\in I\mapsto L(x,t)$.
\end{thm}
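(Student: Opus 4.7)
The Besov regularity \eqref{Besov L} is an immediate consequence of the uniform modulus of continuity \eqref{modulus of continuity of L unif x} together with the definition of $\mathbf{B}^{1-d\alpha}_{p,\infty}(I;\R)$, so the main task is to establish \eqref{modulus of continuity of L unif x}. The plan is to adapt Berman's Fourier analytic scheme, using the $\alpha$-LND hypothesis as the replacement for the Gaussian LND: first derive moment bounds for the temporal increments $L(x,r+h)-L(x,r)$, then localise in $x$ via a chaining argument to handle the supremum over $\R^d$, and finally convert moment bounds into the almost sure statement through a Borel--Cantelli argument coupled with monotonicity.

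The first step is to show, for every integer $m\geq 1$,
\begin{equation*}
\E\bigl[|L(x,r+h)-L(x,r)|^m\bigr]\leq C_m\,|h|^{m(1-d\alpha)},
\end{equation*}
uniformly in $x\in\R^d$ and $r\in I(h)$. Starting from the Fourier inversion identity
\begin{equation*}
L(x,r+h)-L(x,r)=\frac{1}{(2\pi)^d}\int_{\R^d} e^{-i\langle u,x\rangle}\int_r^{r+h} e^{i\langle u,X_s\rangle}\,ds\,du,
\end{equation*}
I would expand the $m$-th moment as an iterated integral over $(u_1,\ldots,u_m)\in(\R^d)^m$ and $(s_1,\ldots,s_m)\in [r,r+h]^m$, symmetrise over the ordering $s_1<\cdots<s_m$, and apply the $\alpha$-LND bound (Definition \ref{alphha LND}) to the resulting joint characteristic function after the telescoping change of variables $v_k=\sum_{j\geq k} u_j$. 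Successive integration in the $v_k$ and then in the $s_k$ produces the factor $|h|^{m(1-d\alpha)}$ with an absolute constant, mimicking the computation of \cite{boufoussi2021local}.

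I would then prove the analogous estimate for spatial differences: by inserting $|1-e^{-i\langle u,x-y\rangle}|\leq |u|^\gamma|x-y|^\gamma$ into the Fourier integrand,
\begin{equation*}
\E\bigl[|L(x,r+h)-L(x,r)-L(y,r+h)+L(y,r)|^m\bigr]\leq C_{m,\gamma}\,|x-y|^{m\gamma}\,|h|^{m(1-d\alpha-\gamma)}
\end{equation*}
for suitable small $\gamma>0$. Combining these two estimates via a Garsia--Rodemich--Rumsey / Sobolev embedding argument in the $x$ variable --- and exploiting that $x\mapsto L(x,t)$ is almost surely supported in the compact random ball $B(0,\sup_{s\leq 1}\|X_s\|)$, whose radius has finite polynomial moments by hypothesis \textbf{H} --- yields, for $m$ large enough,
\begin{equation*}
\E\Bigl[\sup_{x\in\R^d}|L(x,r+h)-L(x,r)|^m\Bigr]\leq C_m\,|h|^{m(1-d\alpha)}.
\end{equation*}

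Finally, Fubini and Jensen give $\E[G(h)^m]\leq C\,|h|^{m(1-d\alpha)}$ for $G(h)=\|r\mapsto\sup_x|L(x,r+h)-L(x,r)|\|_{L^p(I(h);\R)}$ and any $m\geq p$. A Chebyshev / Borel--Cantelli argument along the dyadic sequence $h_n=2^{-n}$, together with the monotonicity of $t\mapsto\sup_{|h|\leq t}G(h)$, then yields the almost sure bound \eqref{modulus of continuity of L unif x}. The main obstacle is securing the sharp exponent $1-d\alpha$ --- with no $\varepsilon$-loss of the sort typically incurred by Kolmogorov chaining --- in the uniform-in-$x$ passage; the Besov scale $\mathbf{B}^{s}_{p,\infty}$ is precisely the natural setting here, since it only requires an $O(|h|^s)$ modulus of continuity, exactly what the moment method delivers, whereas pathwise Hölder regularity would require the strictly stronger $o(|h|^s)$ and thus a genuine loss in the exponent.
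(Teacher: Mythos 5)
Your overall scheme (Fourier moment bounds via $\alpha$-LND, a spatial argument for the supremum in $x$, then Chebyshev and Borel--Cantelli) is in the right spirit, but it breaks down exactly where the sharp exponent has to be secured. First, the uniform-in-$x$ moment bound. You localize $\sup_{x\in\R^d}$ to the ball $B(0,\sup_{s\le 1}\|X_s\|)$, whose radius is of order one. With the spatial-increment estimate $\E\bigl[|\Delta_h L(x)-\Delta_h L(y)|^m\bigr]\lesssim |x-y|^{m\gamma}|h|^{m(1-d\alpha-\alpha\gamma)}$ (note the exponent is $1-d\alpha-\alpha\gamma$, not $1-d\alpha-\gamma$), a GRR/Sobolev embedding over a ball of fixed radius yields at best $\E\bigl[\sup_x|\Delta_h L(x)|^m\bigr]\lesssim |h|^{m(1-d\alpha-\alpha\gamma)}$ with $\gamma$ bounded below by a multiple of $d/m$, i.e.\ a genuine loss in the $h$-exponent. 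The loss is compensated only if the ball has radius comparable to the oscillation of $X$ over an interval of length $h$, roughly $h^{\alpha}$, since then the factor $|x-y|^{m\gamma}\approx h^{m\alpha\gamma}$ cancels it; this is precisely why the paper recentres each dyadic block $D_{j,k}$ at the random point $X(d_{j,k})$, controls the block oscillation almost surely by $2^{-j(\alpha-(2-\delta)/p_0)}$ via Lemma \ref{lem estimation X} (hypothesis \textbf{H} gives only $p_0$ moments, so one cannot bound $\E[\mathrm{osc}^m]$ for large $m$ as your argument would require), and then runs a discrete chaining over the grids $G_j$ and $F(j,h,x)$ rather than taking expectations of a supremum.

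Second, and more fundamentally, your closing Chebyshev/Borel--Cantelli step does not close. From $\E[G(2^{-j})^m]\le C_m 2^{-jm(1-d\alpha)}$ alone, Chebyshev at the sharp threshold $\lambda\,2^{-j(1-d\alpha)}$ gives probabilities bounded by $C_m\lambda^{-m}$, which are not summable in $j$; restoring summability forces a threshold inflated by $2^{j\varepsilon}$, i.e.\ exactly the $\varepsilon$-loss you claim to avoid. Working on the $\mathbf{B}^{s}_{p,\infty}$ scale does not rescue this: almost sure finiteness of $\sup_{j}2^{j(1-d\alpha)}G(2^{-j})$ still needs a summable tail. The paper manufactures the missing summable factor $2^{-jd\alpha}$ in Lemma \ref{lem chebechev local time} by estimating the entire block sum $\sum_{k=1}^{2^j-1}\sup_x|L(x+X(d_{j,k}),D_{j,k})|^q$ at once and exploiting the decay $(k-1)^{-d\alpha}$ of the $k$-th block moment, which originates from the factor $(t_1-t_0)^{-\alpha k_{1,l}}$ in the $\alpha$-LND bound anchored at $t_0=0$; your route, which bounds $\E[G(h)^m]$ directly by Fubini and Jensen from the pointwise-in-$r$ estimate, cannot produce any such gain. (A smaller point: reducing $\sup_{|h|\le t}$ to dyadic $h=2^{-j}$ is not mere monotonicity; it is the equivalence in Theorem \ref{thm equivalance norm}(iii), which is also what converts the dyadic bound into \eqref{modulus of continuity of L unif x}.)
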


As mentioned above, we know that the regularity of the local time is linked to the irregular behavior of the sample paths of the corresponding process.
%%%%%%%SEPARATION
Recall that Adler's theorem, \cite[Theorem 8.7.1]{adler1981geometry}, establishes a connection between the H\"{o}lder continuity, in the time variable $t$ uniformly on the space variable $x$, of the local time and the H\"{o}lder irregularity of its underlying function, as follows:
\begin{thm}[Adler]
  Let $(f(t))_{t\in [0,1]}$ be an $\R^d$-valued continuous function possessing a local time, $L(x,t)$, satisfying: there exist $c$ and $\rho$ positive and finite constants, such that  for all $t,t+h\in [0,1]$ and all $|h|<\rho$,
  \begin{equation}\label{Adler unifor Holder}
    \sup_{x\in \R^d}|L(x,t+h)-L(x,t)|\leq c|h|^{\mu},
  \end{equation}
for $0<\mu<1$.
Then,  all coordinate functions of $f$ are nowhere H\"{o}lder continuous of order greater than $(1-\mu)/d$.
\end{thm}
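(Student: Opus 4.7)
The plan is to argue by contradiction, following the classical strategy behind Adler's theorem. Fix $t_0 \in [0,1]$ and suppose, against the conclusion, that $f$ is H\"older continuous of some order $\gamma > (1-\mu)/d$ at $t_0$, i.e.\ there exist $C > 0$ and $\delta > 0$ with $\|f(t) - f(t_0)\| \leq C|t-t_0|^{\gamma}$ whenever $|t - t_0| \leq \delta$. I will then contradict \eqref{Adler unifor Holder} by comparing two expressions for the occupation time of a short interval around $t_0$.

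For $h > 0$ with $h \leq \min(\delta, \rho)$, write $I_h := [t_0, t_0+h]$. The H\"older hypothesis confines $f(I_h)$ to the closed cube $Q_h$ centered at $f(t_0)$ of side $2Ch^{\gamma}$. The defining property of $L$ as the density of the occupation measure of $f$ yields, for every Borel set $B \supseteq f(I_h)$,
\begin{equation*}
h \;=\; \lambda(I_h) \;=\; \lambda\{s \in I_h : f(s) \in B\} \;=\; \int_B [L(x, t_0+h) - L(x, t_0)] \, dx.
\end{equation*}
Taking $B$ to be the $\epsilon$-fattening $Q_h^{(\epsilon)}$, whose Lebesgue measure is at most $(2Ch^{\gamma}+2\epsilon)^d$, and inserting the uniform bound \eqref{Adler unifor Holder}, I obtain
\begin{equation*}
h \;\leq\; c\, h^{\mu}\,(2Ch^{\gamma}+2\epsilon)^d.
\end{equation*}
Letting $\epsilon \downarrow 0$ gives $h^{1-\mu-d\gamma} \leq c(2C)^d$; since $\gamma > (1-\mu)/d$, the left-hand exponent is strictly negative, and the inequality is violated as $h \downarrow 0$, producing the desired contradiction. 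By the standard equivalence between H\"older continuity of the vector $f$ and the simultaneous H\"older continuity of all of its coordinate functions, this gives the stated conclusion.

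The step requiring explicit justification is the support-localization identity, namely that $\int_{B^c}[L(x,t_0+h) - L(x,t_0)] \, dx = 0$ whenever $B \supseteq f(I_h)$. This is immediate from the Radon-Nikodym characterization of $L$: the occupation measure of $I_h$ assigns zero mass to $B^c$ because $\{s \in I_h : f(s) \in B^c\}$ is empty by construction. Apart from this observation, the remainder of the argument consists of elementary volume estimates on cubes and their fattenings, and the proof uses only the continuity of $f$ together with the uniform bound \eqref{Adler unifor Holder} on $L$; no joint regularity of $L$ is needed.
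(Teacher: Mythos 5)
Your core occupation-measure argument is sound, and it is exactly the mechanism the paper itself uses for its Besov analogue (proof of Theorem \ref{Adler Besov}): the increment of the occupation measure of $[t_0,t_0+h]$ has total mass $h$, is carried by $f([t_0,t_0+h])$, and $\lambda_d\bigl(f([t_0,t_0+h])\bigr)\leq \bigl(\sup_{s,t\in[t_0,t_0+h]}\|f(s)-f(t)\|\bigr)^d$, so the uniform H\"older bound on $L$ gives $h^{1-\mu}\leq c'\,\sup_{s,t\in[t_0,t_0+h]}\|f(s)-f(t)\|^{d}$ (the $\epsilon$-fattening is unnecessary, since the cube $Q_h$ is itself a Borel set containing $f(I_h)$). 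This correctly proves that the \emph{vector-valued} function $f$ is nowhere pointwise H\"older of any order $\gamma>(1-\mu)/d$, and for $d=1$ that settles the statement.

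The gap is your final sentence. The equivalence ``$f$ is H\"older at $t_0$ iff all coordinates are H\"older at $t_0$'', applied to your negative conclusion, yields only that at each $t_0$ \emph{at least one} coordinate fails to be H\"older of order $\gamma$; it does not give that \emph{every} coordinate fails, which is what the quoted statement asserts. Nor can the coordinatewise exponent $(1-\mu)/d$ be recovered from your volume bound: to isolate a single coordinate $f_{i}$ one must estimate $\lambda_d(f(I_h))$ by the product of the $d$ coordinate oscillations, and since the other $d-1$ oscillations are controlled only by continuity, this route gives at best that each individual coordinate is nowhere H\"older of order $>1-\mu$, a strictly weaker exponent when $d\geq 2$. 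Indeed the coordinatewise claim with exponent $(1-\mu)/d$ is genuinely out of reach of this argument: for a vector of independent fractional Brownian motions with distinct Hurst indices $H_1<H_2$, $H_1+H_2<1$, the local time satisfies the hypothesis with $\mu$ close to $1-H_1-H_2$, yet the smoother coordinate is everywhere H\"older of some order exceeding $(1-\mu)/2$. Note that the paper only needs, and its Besov analogues (Theorems \ref{Adler Besov} and \ref{thm non Besov X}) only state, the vector form; so what you have proved is the version actually used downstream, but the passage from it to ``all coordinate functions'' is a non sequitur and must either be removed or replaced by a separate argument with the weaker exponent $1-\mu$ per coordinate.
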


One of the goals of this article is to provide a similar theorem to that of Adler in the case of Besov spaces. We have the following theorem:
\begin{thm}\label{Adler Besov}
 Let $(f(t))_{t\in [0,1]}$ be an $\R^d$-valued continuous function possessing a local time, $L(x,t)$, satisfying for some $\mu\in (0,1)$ and $p\in (d/(1-\mu),\infty)$,
  \begin{equation}\label{modulus continuity Besov Adler}
 \sup_{0<t\leq 1} t^{-\mu/d}\sup_{|h|\leq t}\|s\mapsto \sup_{x\in \R^d}|L(x,s+h)-L(x,s)|^{\frac{1}{d}}\|_{L^{\frac{p}{(p-1)}}(I(h);\R)}<\infty.
\end{equation}
  Then,  the function $f$ does not belong to the Besov space $\mathbf{B}^{(1-\mu)/d,0}_{p,\infty}(I,\R^d)$, where $I=[0,1]$  and $I(h)=\{x\in I\,;\;x+h\in I\}$.
\end{thm}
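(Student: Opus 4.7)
The plan is to argue by contradiction via the occupation density formula, pairing the hypothesis \eqref{modulus continuity Besov Adler} against the $L^p$ modulus of continuity of $f$ through H\"older's inequality. Suppose $f\in \mathbf{B}^{(1-\mu)/d,0}_{p,\infty}(I;\R^d)$ and write $\sigma=(1-\mu)/d$. Interpreting the superscript $0$ as the little-oh convention, for any $\epsilon>0$ there exists $\delta_\epsilon>0$ such that
\[
\omega_p(f,v):=\sup_{|h'|\leq v}\|f(\cdot+h')-f(\cdot)\|_{L^p(I(h');\R^d)}\leq \epsilon\,v^{\sigma},\qquad 0<v\leq\delta_\epsilon.
\]
The task is to derive, from this smallness of $\omega_p(f,\cdot)$, an $h$-independent bound that forces a positive constant to be dominated by a positive power of $\epsilon$.

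The pointwise bridge comes from the occupation density formula. Writing $M(s,h):=\sup_{x\in\R^d}|L(x,s+h)-L(x,s)|$ and integrating the indicator of $B(f(s),r)$,
\[
\bigl|\{u\in[s,s+h]:\|f(u)-f(s)\|\leq r\}\bigr|=\int_{B(f(s),r)}\bigl(L(x,s+h)-L(x,s)\bigr)dx\leq c_d\,r^d\,M(s,h),
\]
where $c_d$ is the unit ball volume in $\R^d$. Choosing $r=(h/(2c_dM(s,h)))^{1/d}$ leaves at least a measure-$h/2$ subset of $v\in[0,h]$ on which $\|f(s+v)-f(s)\|>r$, hence
\[
\int_0^h\|f(s+v)-f(s)\|^p\,dv\;\geq\;C_1\,h^{1+p/d}\,M(s,h)^{-p/d}.
\]
Integrating in $s\in[0,1-h]$, applying Fubini, and using $\|f(\cdot+v)-f(\cdot)\|_{L^p([0,1-h];\R^d)}\leq\omega_p(f,v)$ for $v\leq h$, the little-oh estimate yields, via the identity $\sigma p-p/d=-p\mu/d$,
\[
\int_0^{1-h}M(s,h)^{-p/d}\,ds\;\leq\;C_2\,\epsilon^{p}\,h^{-p\mu/d}\qquad\text{for every }h\leq\delta_\epsilon.
\]

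The hypothesis \eqref{modulus continuity Besov Adler}, evaluated at $t=h$ with $q=p/(p-1)$, supplies the matching upper bound
\[
\int_0^{1-h}M(s,h)^{q/d}\,ds\;\leq\;C_3\,h^{q\mu/d}.
\]
The two estimates are dual: setting $\alpha=p/(p+q)$ and $\beta=q/(p+q)$ gives $q\alpha=p\beta$, so $M^{q\alpha/d}\cdot M^{-p\beta/d}\equiv 1$, and H\"older's inequality produces
\[
1-h\;\leq\;\biggl(\int_0^{1-h}M^{q/d}\,ds\biggr)^{\alpha}\biggl(\int_0^{1-h}M^{-p/d}\,ds\biggr)^{\beta}\leq\;C_4\,\epsilon^{pq/(p+q)}.
\]
All $h$-powers cancel by construction; restricting to $h\leq 1/2$ and sending $\epsilon\downarrow 0$ delivers the desired contradiction.

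The main obstacle is identifying the correct weight $r$ inside the occupation formula so that the exponent $-p/d$ produced in the lower bound and the exponent $q/d$ appearing in \eqref{modulus continuity Besov Adler} become exactly H\"older-dual, allowing the $h$-dependence to vanish in the final inequality. This is precisely why $q$ must equal $p/(p-1)$ and why the Besov exponent is $(1-\mu)/d$; the assumption $p>d/(1-\mu)$, equivalently $\sigma p>1$, keeps the Besov space $\mathbf{B}^{\sigma,0}_{p,\infty}(I;\R^d)$ above the embedding threshold and legitimizes the pointwise use of the continuous representative of $f$ throughout the argument.
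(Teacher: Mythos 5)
Your argument is correct, but it follows a genuinely different route from the paper's proof of Theorem \ref{Adler Besov}. The paper bounds $\lambda_d(f([s,s+h]))$ by the $d$-th power of the windowed oscillation $\sup_{r,\tau\in[s,s+h]}\|f(r)-f(\tau)\|$, applies H\"older against the local-time factor to obtain the lower bound \eqref{X sup L2} on the oscillation modulus, and then must pass from that oscillation modulus back to the ordinary first-difference Besov modulus; this last step is exactly where Theorem \ref{B0 equaal b0} (proved via the Garsia--Rodemich--Rumsey inequality) and Theorem \ref{thm equivalance norm}(iv) are invoked, and it is the only place the restriction $p>d/(1-\mu)$ enters. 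You avoid the oscillation entirely: the occupation formula \eqref{occupation formula} applied to $\1_{B(f(s),r)}$ with the Chebyshev-type choice $r=(h/(2c_dM(s,h)))^{1/d}$ gives the pointwise-in-$s$ bound $\int_0^h\|f(s+v)-f(s)\|^p\,dv\geq C_1 h^{1+p/d}M(s,h)^{-p/d}$, which uses only first differences and hence pairs directly with the little-Besov smallness of $\omega_p(f,\cdot)$; the H\"older pairing of $\int M^{q/d}$ (from \eqref{modulus continuity Besov Adler}) against $\int M^{-p/d}$ with exponents $(p+q)/q$ and $(p+q)/p$ then cancels both $M$ and all powers of $h$, yielding $1/2\leq C\epsilon^{pq/(p+q)}$ and the contradiction. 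The exponent arithmetic ($q\alpha=p\beta$, $\sigma p-p/d=-\mu p/d$) checks out, and the implicit points (measurability of $s\mapsto M(s,h)$, $0<M(s,h)<\infty$ since the occupation measure of $[s,s+h]$ has mass $h$ and compact support, the identification $L(x,s+h)-L(x,s)=L(x,[s,s+h])$) are harmless and used equally implicitly by the paper. What each approach buys: the paper's route produces the quantitative lower bound \eqref{X sup L2} on the oscillation modulus, a statement of independent interest closest in spirit to Adler's original theorem, at the cost of the auxiliary space $\mathbf{b}^{\nu,0}_{p,\infty}$ and the GRR machinery; your route is shorter and self-contained, and in fact never uses $p>d/(1-\mu)$ (your closing justification for that hypothesis is moot, since $f$ is assumed continuous in the statement), so it proves the conclusion under formally weaker conditions on $p$.
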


%%%%%%SUIT SEPARATION
Based on the above results, we note that the Besov regularity, in the time variable $t$ uniformly on the space variable $x$ and in $p$, of the local times $L(x,t)$ associated with $\alpha$-LND stochastic processes is valuable, as this knowledge can be applied towards uniform (in $p$) Besov irregularity of the underlying processes. Therefore, as a consequence of Theorem \ref{thm Besov regularity of L} and \ref{Adler Besov}, we will obtain the following theorem:
\begin{thm}\label{thm non Besov X}
  Let $(X_t)_{t\in [0,1]}$  be an $\R^d$-valued continuous stochastic process, $X(0)=0$, which is $\alpha$-LND with $\alpha\in (0,\frac{1}{d})$. Assume also that  $X$ verifies \normalfont{\textbf{H}}. Then
  \begin{equation}\label{No Besov}
    \mathbb{P}\left[X(\bullet)\in \mathbf{B}^{\alpha,0}_{p,\infty}(I,\R^d)\,,\;\text{ for some } p\in(1/\alpha,\infty) \right]=0,
  \end{equation}
  where $I=[0,1]$ and $X(\bullet):t\in I\mapsto X_t\in \R^d$.
\end{thm}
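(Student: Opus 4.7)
The plan is to apply Theorem~\ref{Adler Besov} pathwise to $f := X(\omega, \bullet)$, using the Besov regularity of the local time provided by Theorem~\ref{thm Besov regularity of L}. To match the target Besov exponent $\alpha$ with the exponent $(1-\mu)/d$ appearing in the conclusion of Theorem~\ref{Adler Besov}, I will take $\mu := 1 - d\alpha$; this lies in $(0,1)$ by the hypothesis $\alpha \in (0, 1/d)$, and the admissibility range $p > d/(1-\mu) = 1/\alpha$ of Theorem~\ref{Adler Besov} coincides precisely with the range $p \in (1/\alpha, \infty)$ appearing in \eqref{No Besov}.

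First, I will fix the almost sure event $\Omega_0$ on which \eqref{modulus of continuity of L unif x} holds for every $p \in [1,\infty)$, as furnished by Theorem~\ref{thm Besov regularity of L}. For $\omega \in \Omega_0$, I set $f := X(\omega, \bullet)$ and
$$\Delta(s, h) := \sup_{x \in \R^d} |L(x, s+h) - L(x, s)|.$$
The goal is to show, for each $p \in (1/\alpha, \infty)$, that $f$ satisfies the hypothesis \eqref{modulus continuity Besov Adler} of Theorem~\ref{Adler Besov} with this choice of $\mu$, whence $f \notin \mathbf{B}^{\alpha, 0}_{p, \infty}(I, \R^d)$. Since the event $\Omega_0$ does not depend on $p$, this forces the event in \eqref{No Besov} to lie in $\Omega_0^c$ and thus to have probability zero.

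The main technical step, and the expected main obstacle, is converting the $L^p$ bound on $\Delta$ provided by \eqref{modulus of continuity of L unif x} into the $L^{p/(p-1)}$ bound on $\Delta^{1/d}$ required by \eqref{modulus continuity Besov Adler}. Setting $q^{\ast} := p/(d(p-1))$, the elementary identity
$$\|\Delta(\cdot, h)^{1/d}\|_{L^{p/(p-1)}(I(h))} = \|\Delta(\cdot, h)\|_{L^{q^{\ast}}(I(h))}^{1/d}$$
reduces the matter to controlling $\|\Delta(\cdot, h)\|_{L^{q^{\ast}}(I(h))}$ by a constant multiple of $t^{1-d\alpha}$. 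When $q^{\ast} \geq 1$, this is a direct instance of \eqref{modulus of continuity of L unif x}. The problematic case $q^{\ast} < 1$, which occurs as soon as $d \geq 2$ and $p > d/(d-1)$, will be handled using $|I(h)| \leq 1$ together with the standard Hölder/Jensen inequality
$$\|g\|_{L^{q^{\ast}}(I(h))} \leq |I(h)|^{1/q^{\ast} - 1} \|g\|_{L^{1}(I(h))} \leq \|g\|_{L^{1}(I(h))},$$
which reduces the estimate back to \eqref{modulus of continuity of L unif x} at exponent $p = 1$. Combining both cases then supplies the required inequality \eqref{modulus continuity Besov Adler} uniformly in $t \in (0,1]$ and $|h| \leq t$, completing the pathwise application of Theorem~\ref{Adler Besov}.
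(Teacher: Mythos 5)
Your proposal is correct and follows essentially the same route as the paper: take $\mu=1-d\alpha$, rewrite $\|\sup_x|L(x,\cdot+h)-L(x,\cdot)|^{1/d}\|_{L^{p/(p-1)}(I(h))}$ as $\|\sup_x|L(x,\cdot+h)-L(x,\cdot)|\|_{L^{q^*}(I(h))}^{1/d}$, dominate the possibly sub-unit exponent $q^*$ by an exponent $\geq 1$ using $|I(h)|\leq 1$, and then combine Theorem \ref{thm Besov regularity of L} (whose almost sure event works for all $p$ simultaneously) with a pathwise application of Theorem \ref{Adler Besov}. The paper packages the exponent reduction by choosing $n$ with $d^n\geq p/(p-1)$ and passing to $L^{d^{n-1}}$, while you split into the cases $q^*\geq 1$ and $q^*<1$; this is only a cosmetic difference.
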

According to the following continuous injections
$$\mathbf{B}^{\alpha}_{p,q}(I,\R^d)\hookrightarrow \mathbf{B}^{\alpha,0}_{p,\infty}(I,\R^d),\qquad 1\leq q<\infty,$$
we get:
\begin{cor}\label{cor non Besov X p q}
   Let $(X_t)_{t\in [0,1]}$  be an $\R^d$-valued continuous stochastic process, $X(0)=0$, which is $\alpha$-LND with $\alpha\in (0,\frac{1}{d})$. Assume also that  $X$ verifies \normalfont{\textbf{H}}. Then
   \begin{equation}\label{No Besov p q}
    \mathbb{P}\left[X(\bullet)\in \mathbf{B}^{\alpha}_{p,q}(I,\R^d)\,,\;\text{ for some } p\in(1/\alpha,\infty) \text{ and } q\in [1,\infty)\right]=0,
  \end{equation}
  where $I=[0,1]$ and $X(\bullet):t\in I\mapsto X_t\in \R^d$.
\end{cor}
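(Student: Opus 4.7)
The plan is to derive Corollary \ref{cor non Besov X p q} directly from Theorem \ref{thm non Besov X} using the continuous Besov embedding
\[
\mathbf{B}^{\alpha}_{p,q}(I,\R^d)\hookrightarrow \mathbf{B}^{\alpha,0}_{p,\infty}(I,\R^d),\qquad 1\leq q<\infty,
\]
recalled immediately before the statement. No new probabilistic input is required beyond Theorem \ref{thm non Besov X}: the corollary is a straightforward consequence obtained by passing from the event level to the level of $p$- and $q$-unions.

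First, I would fix $p\in(1/\alpha,\infty)$ and $q\in[1,\infty)$ and, using the embedding, observe the pathwise inclusion of events
\[
\{\omega:X(\bullet,\omega)\in \mathbf{B}^{\alpha}_{p,q}(I,\R^d)\}\ \subseteq\ \{\omega:X(\bullet,\omega)\in \mathbf{B}^{\alpha,0}_{p,\infty}(I,\R^d)\}.
\]
Next, taking the union of the left-hand event over all $(p,q)\in(1/\alpha,\infty)\times[1,\infty)$ and the union of the right-hand event over all $p\in(1/\alpha,\infty)$, I would obtain
\[
\{X(\bullet)\in \mathbf{B}^{\alpha}_{p,q}(I,\R^d)\text{ for some }p,q\}\ \subseteq\ \{X(\bullet)\in \mathbf{B}^{\alpha,0}_{p,\infty}(I,\R^d)\text{ for some }p\}.
\]
Since the right-hand event has probability zero by Theorem \ref{thm non Besov X}, the same holds for the left-hand event, which is precisely \eqref{No Besov p q}.

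There is essentially no serious obstacle: the entire substantive work has already been done in Theorems \ref{thm Besov regularity of L} and \ref{Adler Besov}, which combine into Theorem \ref{thm non Besov X}, and the corollary is merely a reformulation via the embedding. The only cosmetic point is a measurability remark — the left-hand event is a subset of a $\mathbb{P}$-null set whose measurability has already been implicitly handled in Theorem \ref{thm non Besov X}, so it is automatically $\mathbb{P}$-negligible.
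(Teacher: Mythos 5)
Your proposal is correct and follows exactly the paper's own argument: the corollary is deduced from Theorem \ref{thm non Besov X} via the continuous injections \eqref{injection Besov}, with your event-inclusion and union-over-$(p,q)$ remarks merely spelling out what the paper leaves implicit.
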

In the literature, there was a few separate attempts to study the Besov irregularity of some real valued Gaussian processes, we mention \cite{roynette1993mouvement, boufoussi2020besov1, boufoussi2020besov}. In contrast to the proof of Theorem \ref{thm non Besov X}, which relies  on local times, the proof of these  articles is based on a straightforward calculation.
%It appears that Theorem \ref{thm non Besov X}, for $d=1$, covers some well-known results in \cite{roynette1993mouvement} and \cite{boufoussi2020besov1, boufoussi2020besov}.
In Section \ref{Examples}, we illustrate Theorem  \ref{thm non Besov X} and \ref{thm Besov regularity of L} with some examples. We infer the uniform Besov regularity of the local times, as well as the uniform Besov irregularity, of the following $\alpha$-LND processes: First, we consider an $\R^d$-valued Gaussian process $(Y_t)_{t\in [0,1]}$, with $Y_t=(Y^1,\ldots,Y^d_t)$, where $Y^1,\ldots,Y^d_t$ are independent copies of a real-valued LND Gaussian process $(Y^0_t)_{t\in [0,1]}$, satisfying for some $\alpha\in (0,1)$ and finite constants $c, C>0$,
  \begin{equation}\label{minoration var}
    c(t-s)^{2\alpha}\leq\var\left(Y^0_{t}-Y^0_{s}\right)\leq C(t-s)^{2\alpha},
  \end{equation}
  for every $0\leq s<t\leq 1$.
Notice that the $d$-dimensional bifractional Brownian motion verifies the above conditions.
Secondly, as an example of non-Gaussian and non-stable processes, we regard systems of non-linear stochastic heat equations.

In this article, to deal with Besov spaces, we are based on representations of the Besov norms in terms
of dyadic expansion coefficients of a given function. These descriptions of the Besov norms are derived from \cite[3.b.9 Corollary]{konig1986eigenvalue}. To the best of our knowledge, the treatise of K\"{o}nig \cite{konig1986eigenvalue} has been used  for the first time to investigate Besov regularity of stochastic processes by Hyt\"{o}nen and Veraar \cite{hytonen2008besov}.

%it has been used, as far as we know for the first time, to investigate Besov regularity of stochastic processes by Hyt\"{o}nen and Veraar \cite{hytonen2008besov}.

The rest of the paper is organized as follows. In the second section, we write
some preliminary results on Besov spaces and local times. The third section is devoted to the proofs of the main results. In
the fourth section, we give some examples.

Finally, we point out that constants in our proofs may change from line to line. For a process $X=(X_t)_{t\in [0,1]}$, sometimes if necessary, we write $X(t)$ instead of  $X_t$.
\section{Preliminaries}
\subsection{Besov spaces}\label{subsection1}
For the definition of the real-valued Besov spaces, we refer to \cite{triebel1995interpolation}, and for the vector-valued Besov spaces, we suggest the treatise \cite{konig1986eigenvalue}. Let $I=[0,1]$, for any $h\in \R$, we put $I(h)=\{x\in I\,;\;x+h\in I\}$.
%Let $f\in L^p(I;\R^d)$. For all $1\leq l\leq N$ and $x\in I_l(h)$, we define
%$$\Delta_l(h)f(x)=f(x+he_l)-f(x).$$
Let $1\leq p<\infty$ and $\nu\in (0,1)$, the modulus of continuity of a function $f\in L^p(I;\R^d)$ is defined by
\begin{equation}\label{modulus of continuity}
  \omega_p(f,t)=\sup_{|h|\leq t}\|x\mapsto f(x+h)-f(x)\|_{L^p(I(h);\R^d)}.
\end{equation}
We define the vector-valued Besov space $\mathbf{B}^{\nu}_{p,\infty}(I;\R^d)$ as the space of all functions $f\in L^p(I;\R^d)$ such that the seminorm $\mathcal{N}_{\nu,p}(f):=\sup_{0<t\leq 1}t^{-\nu}\omega_p(f,t)$ is finite. $\mathbf{B}^{\nu}_{p,\infty}(I;\R^d)$ endowed with the sum of the $L^p$-norm and the seminorm $\mathcal{N}_{\nu,p}$ is a Banach space. Let $\mathbf{B}^{\nu,0}_{p,\infty}(I,\R^d)$ be the space of all functions $f\in \mathbf{B}^{\nu}_{p,\infty}(I;\R^d)$ for which $\lim_{t\to 0^+}t^{-\nu}\omega_p(f,t)=0$. Using a dyadic approximation argument (see the lemma in page 173 and Corollary 3.b.9 in \cite{konig1986eigenvalue}) one has the following theorem:
\begin{thm}\label{thm equivalance norm}
  Let $1\leq p<\infty$ and $\nu\in (0,1)$. We have
  \begin{description}
    \item[(i)] The seminorm $\mathcal{N}_{\nu,p}$ is equivalent to
    \begin{equation}\label{discretisation}
      \|f\|_{\nu,p}:= \sup_{j\geq 0}2^{j\nu}\|x\mapsto f(x+2^{-j})-f(x)\|_{L^p(I(2^{-j});\R^d)}.
    \end{equation}
    \item[(ii)] Let $f\in L^p(I;\R^d)$. Then $f$ is in $\mathbf{B}^{\nu,0}_{p,\infty}(I,\R^d)$ if and only if
    \begin{equation}\label{discretisation B0}
      \lim_{j\to \infty} 2^{j\nu}\|x\mapsto f(x+2^{-j})-f(x)\|_{L^p(I(2^{-j});\R^d)}=0.
    \end{equation}
    \item[(iii)] Let $1\leq p<\infty$ and $0<\nu<1$. Then there exists a positive and finite constant $c$ such that for all $g$ from $\R^d\times [0,1]$ to $\R$ jointly continuous function with compact support,
        \begin{equation}\label{modulus uniform equvalence}
        \begin{split}
           &c^{-1}\sup_{0<t\leq 1}t^{-\nu}\sup_{|h|\leq t}\|r\mapsto \sup_{x\in \R^d}|g(x,r+h)-g(x,r)|\|_{L^p(I(h);\R)}  \\
           &\leq \sup_{j\geq 0} 2^{j\nu}\|r\mapsto \sup_{x\in \R^d}|g(x,r+2^{-j})-g(x,r)|\|_{L^p(I(2^{-j});\R)} \\
           & \leq c\sup_{0<t\leq 1}t^{-\nu}\sup_{|h|\leq t}\|r\mapsto \sup_{x\in \R^d}|g(x,r+h)-g(x,r)|\|_{L^p(I(h);\R)}.
           \end{split}
        \end{equation}
    \item[(iv)] Let $1\leq p<\infty$, $0<\nu<1$, and $f$ be an $\R^d$-valued continuous function. Then
$$\lim_{t\to 0^+}t^{-\nu}\sup_{|h|\leq t}\|x\mapsto\sup_{y,z\in[0,1]}\|f(hz+x)-f(hy+x)\|\|_{L^p(I(h),\R)}=0,$$
if and only if
$$\lim_{j\to \infty} 2^{j\nu}\|x\mapsto\sup_{y,z\in [0,1]}\| f(2^{-j}z+x)-f(2^{-j}y+x)\|\|_{L^p(I(2^{-j});\R)}=0.$$
  \end{description}
\end{thm}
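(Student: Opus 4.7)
My plan is to deduce all four items from a single underlying principle—the dyadic characterization of Besov seminorms in the vector-valued setting, supplied by the lemma on page 173 and Corollary 3.b.9 of König's treatise that the authors cite just above. Parts (i) and (ii) are essentially direct translations of that principle, while parts (iii) and (iv) require a reinterpretation of the object under consideration so that the same machinery applies.

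For part (i) the nontrivial direction is that finiteness of the dyadic supremum $\|f\|_{\nu,p}$ controls the full modulus $\omega_p(f,t)$. Given $0<t\leq 1$, choose $j$ with $2^{-(j+1)}\leq t<2^{-j}$, and for $h$ with $0\leq h\leq t$ write the binary expansion $h=\sum_{k\geq j+1}\epsilon_k 2^{-k}$ with $\epsilon_k\in\{0,1\}$ (negative $h$ being symmetric). Setting $h_n=\sum_{k=j+1}^n\epsilon_k 2^{-k}$, the telescoping identity
\[
f(x+h)-f(x)=\sum_{k\geq j+1}\epsilon_k\bigl[f(x+h_{k-1}+2^{-k})-f(x+h_{k-1})\bigr],
\]
combined with Minkowski's inequality in $L^p$, translation invariance of Lebesgue measure, and the bound $2^{k\nu}\|f(\cdot+2^{-k})-f(\cdot)\|_{L^p}\leq\|f\|_{\nu,p}$ yields a convergent geometric series dominated by $C\,t^\nu\,\|f\|_{\nu,p}$ since $\nu>0$. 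The reverse inequality is trivial because $h=2^{-j}$ is admissible. Part (ii) follows from the same telescoping: each term $\|f(\cdot+2^{-k})-f(\cdot)\|_{L^p}$ is by hypothesis $o(2^{-k\nu})$, so the geometric tail starting at $k=j+1$ is $o(2^{-j\nu})=o(t^\nu)$ as $t\to 0$.

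For part (iii) the key move is to regard $g$ as a Banach-space-valued function $r\mapsto g(\cdot,r)\in C_b(\R^d)$, so that $\sup_{x\in\R^d}|g(x,r+h)-g(x,r)|$ is precisely the $C_b$-norm of its increment and the outer $L^p(I(h);\R)$ norm becomes a Bochner norm; joint continuity and compact support supply the required measurability. With this identification, (iii) is exactly (i) applied to the Bochner-valued function in König's framework, which accommodates arbitrary Banach targets. For part (iv), set $\Phi_h(x):=\sup_{y,z\in[0,1]}\|f(hz+x)-f(hy+x)\|$, which is the oscillation of $f$ on $[x,x+h]$; the obvious monotonicity $\Phi_s(x)\leq\Phi_t(x)$ for $0<s\leq t$ reduces the continuous-vs-dyadic equivalence to the behaviour of the single sequence $\|\Phi_{2^{-j}}\|_{L^p}$.

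The main obstacles I expect are, first, verifying that the reinterpretation in (iii) really fits within König's Banach-valued dyadic theorem—Bochner measurability of $r\mapsto g(\cdot,r)$ in $C_b(\R^d)$ is the point to check, and it follows from joint continuity and compact support; and second, in (iv), handling the slight mismatch between the integration domains $I(h)$ and $I(t)$ when $h<t$, which I would resolve by bounding the measure of the boundary strip $I(h)\setminus I(t)$ by $t-h$ and using uniform continuity of $f$ on $I$ to render its contribution negligible. Everything else is bookkeeping around the dyadic estimates already packaged in the König reference.
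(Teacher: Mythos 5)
The paper gives no written proof of this theorem (it is quoted from the dyadic-approximation lemma on p.~173 and Corollary 3.b.9 of K\"onig), so the question is whether your reconstruction is sound. For (i)--(iii) it is, and it is exactly the route the paper points to: the telescoping along the binary expansion of $h$, Minkowski's inequality and a change of variables, together with the observation that every intermediate point $x+h_k$ stays in $I$ because $h_k\le h$, gives $\omega_p(f,t)\le (1-2^{-\nu})^{-1}t^{\nu}\|f\|_{\nu,p}$, while the reverse inequality is trivial; (ii) is the same telescoping with an $\varepsilon$-bookkeeping; and reading (iii) as (i) applied to the $C_b(\R^d)$-valued map $r\mapsto g(\cdot,r)$ (continuous, hence Bochner measurable, by joint continuity and compact support) is legitimate and yields a constant depending only on $\nu$, uniformly in $g$, as the statement requires.

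The gap is in (iv). There $\Phi_h(x)=\sup_{y,z\in[0,1]}\|f(hz+x)-f(hy+x)\|$ is an oscillation over the window $[x,x+h]$, and the monotonicity $\Phi_h\le\Phi_{2^{-j}}$ (for $h\le 2^{-j}$) is only available on $I(2^{-j})$, leaving the strip $(1-2^{-j},1-h]$ of length at most $2^{-j}$. Your proposed treatment of that strip --- its measure times the uniform continuity of $f$ --- gives at best $t^{-\nu}(2^{-j})^{1/p}\,\omega_f(2^{-j})\approx 2^{j(\nu-1/p)}\omega_f(2^{-j})$, and continuity alone provides no rate for $\omega_f$; this need not tend to $0$ once $\nu>1/p$. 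That is precisely the regime in which the paper uses (iv): in Theorem \ref{Adler Besov} one has $p>d/(1-\mu)=1/\nu$, and Theorem \ref{B0 equaal b0} is likewise stated for $p>1/\nu$. So the step fails exactly where it matters. The repair stays within your scheme but must draw on the dyadic hypothesis rather than on continuity: for $x$ in the strip, $[x,x+h]\subseteq[1-2^{-j},1]$, hence $\Phi_h(x)\le\sup_{u,v\in[1-2^{-j},1]}\|f(u)-f(v)\|\le\Phi_{2^{-(j-1)}}(x')$ for every $x'\in[1-2^{-(j-1)},1-2^{-j}]$, an interval of length $2^{-j}$ contained in $I(2^{-(j-1)})$; consequently $\int_{1-2^{-j}}^{1-h}\Phi_h(x)^p\,dx\le\|\Phi_{2^{-(j-1)}}\|^p_{L^p(I(2^{-(j-1)}))}$, so the strip contribution is absorbed by the dyadic quantity at the coarser scale $j-1$, which tends to $0$ by hypothesis. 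With that substitution your argument for (iv) goes through.
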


  Now we will introduce the spaces $\mathbf{b}^{\nu,0}_{p,\infty}(I;\R^d)$, which will play a key role in the proof of Theorem \ref{Adler Besov}.  Let $I=[0,1]$, $1\leq p<\infty$, and $\nu\in (0,1)$, $\mathbf{b}^{\nu,0}_{p,\infty}(I;\R^d)$ is defined as the space of all functions $f\in C(I;\R^d)$, where $C(I;\R^d)$ is the space of $\R^d$-valued continuous functions, such that
  \begin{equation}\label{discretisation b0}
      \lim_{j\to \infty} 2^{j\nu}\|x\mapsto\sup_{y,z\in [0,1]}\| f(2^{-j}z+x)-f(2^{-j}y+x)\|\|_{L^p(I(2^{-j});\R)}=0,
    \end{equation}
here $I(2^{-j})=\{x\in I\,;\;x+2^{-j}\in I\}$.
%where $I(x,x+2^{-j}e_l)=[0,x+2^{-j}e_l]\setminus[0,x]$, here $[0,a]=\prod_{l=1}^{N}[0,a_l]$ for $a=(a_1,\ldots,a_N)\in (\R_+)^{N}$.
In the below theorem, we will give the relation between the spaces $\mathbf{b}^{\nu,0}_{p,\infty}(I;\R^d)$ and the classical spaces $\mathbf{B}^{\nu,0}_{p,\infty}(I,\R^d)$.
\begin{thm}\label{B0 equaal b0}
  Let $I=[0,1]$, $\nu\in (0,1)$, and $\frac{1}{\nu}< p<\infty$. Then
  $$\mathbf{b}^{\nu,0}_{p,\infty}(I;\R^d)=C(I;\R^d)\cap \mathbf{B}^{\nu,0}_{p,\infty}(I,\R^d).$$
\end{thm}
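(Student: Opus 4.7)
The plan is to prove the two inclusions separately; the forward inclusion is essentially trivial, while the reverse inclusion is the substantive one and will rest on a Garsia--Rodemich--Rumsey (GRR) style estimate in which the hypothesis $p > 1/\nu$ is precisely what makes the argument work.

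For the forward direction, $\mathbf{b}^{\nu,0}_{p,\infty}(I;\R^d) \subseteq C(I;\R^d) \cap \mathbf{B}^{\nu,0}_{p,\infty}(I,\R^d)$: any $f \in \mathbf{b}^{\nu,0}_{p,\infty}(I;\R^d)$ is continuous on the compact interval $I$ and hence lies in $L^p(I;\R^d)$. Taking $y=0,z=1$ in the defining expression yields the pointwise bound
\[
\|f(x+2^{-j})-f(x)\|\leq \sup_{y,z\in[0,1]}\|f(2^{-j}z+x)-f(2^{-j}y+x)\|,
\]
so that the $L^p(I(2^{-j}))$ norm of the left-hand side is dominated by the corresponding quantity defining $\mathbf{b}^{\nu,0}_{p,\infty}$. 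Hypothesis \eqref{discretisation b0} then forces \eqref{discretisation B0}, and Theorem \ref{thm equivalance norm}(ii) concludes.

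For the reverse direction, let $f\in C(I;\R^d)\cap\mathbf{B}^{\nu,0}_{p,\infty}(I,\R^d)$ and set $V(f,x,h)=\sup_{y,z\in[0,1]}\|f(hz+x)-f(hy+x)\|$. By Theorem \ref{thm equivalance norm}(iv), it suffices to show that $h^{-\nu}\|V(f,\cdot,h)\|_{L^p(I(h);\R)}\to 0$ as $h\to 0^+$. The heart of the argument is the Garsia--Rodemich--Rumsey inequality applied to $f$ restricted to $[x,x+h]$ with $\Psi(u)=u^p$ and $p(u)=u^\gamma$, where the exponent $\gamma$ is chosen in the open interval $(2/p,\nu+1/p)$. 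This interval is nonempty exactly because $\nu p>1$, which is the main obstacle/key use of the hypothesis. GRR then yields a pointwise bound of the form
\[
V(f,x,h)\leq C\,h^{\gamma-2/p}\Bigl(\int_x^{x+h}\!\int_x^{x+h}\frac{\|f(s)-f(t)\|^p}{|s-t|^{p\gamma}}\,ds\,dt\Bigr)^{1/p}.
\]

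Raising to the $p$-th power, integrating over $x\in I(h)$, and using Fubini to interchange the order of integration gives
\[
\|V(f,\cdot,h)\|_{L^p(I(h))}^{p}\leq C\,h^{p\gamma-1}\int_{|h'|\leq h}\frac{1}{|h'|^{p\gamma}}\,\omega_p(f,|h'|)^{p}\,dh'.
\]
Since $f\in\mathbf{B}^{\nu,0}_{p,\infty}(I,\R^d)$ we have $\omega_p(f,r)^p=o(r^{p\nu})$, so for any $\eta>0$ and $h$ sufficiently small the inner integral is bounded by $\eta\int_0^h r^{p\nu-p\gamma}\,dr\leq C\eta\, h^{p\nu-p\gamma+1}$, where convergence at $0$ uses $\gamma<\nu+1/p$. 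Multiplying out yields
\[
\|V(f,\cdot,h)\|_{L^p(I(h))}^{p}\leq C\eta\, h^{p\nu},
\]
so that $h^{-\nu}\|V(f,\cdot,h)\|_{L^p(I(h))}\to 0$ as $h\to 0^+$. Invoking Theorem \ref{thm equivalance norm}(iv) returns the discretized condition \eqref{discretisation b0} and completes the proof.

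The only delicate step is the GRR exponent bookkeeping: we need $\gamma>2/p$ for the GRR integral to converge and $\gamma<\nu+1/p$ for the tail of the $\omega_p$ estimate to be summable, and these constraints are simultaneously satisfiable precisely under the standing hypothesis $p>1/\nu$; everything else is Fubini and the defining properties of $\mathbf{B}^{\nu,0}_{p,\infty}$ and $\mathbf{b}^{\nu,0}_{p,\infty}$.
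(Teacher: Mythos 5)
Your proof is correct and follows essentially the same route as the paper: only the nontrivial inclusion requires work, and it is handled by the Garsia--Rodemich--Rumsey inequality with $\Psi(u)=|u|^p$ and a power weight whose exponent lies in the window $(2/p,\nu+1/p)$ (the paper's $p(u)=|u|^{\nu+\beta/p}$ with $\nu p+\beta>2$ is exactly this choice), followed by Fubini and the vanishing of $t^{-\nu}\omega_p(f,t)$. The only cosmetic difference is that you work at a continuous scale $h$ with a quantitative $o(r^{p\nu})$ estimate and then invoke the (trivial direction of the) dyadic characterization, whereas the paper rescales to $[0,1]$ at dyadic scales $2^{-j}$ and concludes by dominated convergence.
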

At this point, to prove the above theorem, we need the Garsia-Rodemich-Rumsey inequality \cite{garsia1970real}.
\begin{lem}\label{GRR}
  Let $\Psi(u)$ and $p(u)$ be non-negative even functions respectively on $\R$ and $[-1,1]$ with $p(0)=0$ and $\Psi(\infty)=\infty$. Assume that $p(u)$ and $\Psi(u)$ are non decreasing for $u\geq0$ and $p(u)$ is continuous. Let $g(x)$ be continuous on $[0,1]$ and suppose that
$$\int_{0}^{1}\int_{0}^{1}\Psi\left(\frac{g(v)-g(w)}{p(v-w)}\right)dv\,dw\leq B<\infty.$$
Then, for all $z,y\in [0,1]$,
$$|g(z)-g(y)|\leq 8\int_{0}^{|z-y|}\Psi^{-1}\left(\frac{4B}{u^2}\right)dp(u).$$
\end{lem}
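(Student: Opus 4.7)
The plan is to prove this classical inequality by the iterative dyadic construction of Garsia, Rodemich, and Rumsey, driven by Fubini together with a Chebyshev/averaging selection principle.

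First I would introduce the auxiliary function
\begin{equation*}
I(t) := \int_0^1 \Psi\!\left(\frac{g(t)-g(u)}{p(t-u)}\right) du,
\end{equation*}
so that Fubini applied to the hypothesis gives $\int_0^1 I(t)\,dt \le B$. A Chebyshev argument then yields the following selection principle: on any subinterval $J\subset[0,1]$ of length $\ell>0$, the set $\{t\in J:I(t)\le 4B/\ell\}$ has Lebesgue measure at least $3\ell/4$; and for any fixed $t$ and any subinterval $J'$ of length $\ell'$, the set $\{u\in J':\Psi((g(t)-g(u))/p(t-u))\le 4I(t)/\ell'\}$ has measure at least $3\ell'/4$.

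Next, fixing $0\le y<z\le 1$ with $d:=z-y$ and $d_n:=d/2^n$, I would inductively construct two sequences $\{t_n\}_{n\ge 0}$ and $\{s_n\}_{n\ge 0}$ in $[y,z]$ converging respectively to $y$ and $z$. At stage $n$, inside a window of length $d_{n+1}$ adjacent to $t_n$, I would select $t_{n+1}$ belonging simultaneously to the two good subsets given by the selection principle (one ensuring $I(t_{n+1})\le 4B/d_{n+1}$, the other ensuring that $\Psi((g(t_n)-g(t_{n+1}))/p(t_n-t_{n+1}))$ is small in terms of $I(t_n)$). Since both good subsets have measure at least $3d_{n+1}/4$ in a window of length $d_{n+1}$, they intersect, so such a $t_{n+1}$ exists. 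Applying the monotone $\Psi^{-1}$ together with the monotonicity of $p$ produces an increment bound of the shape
\begin{equation*}
|g(t_n)-g(t_{n+1})|\le C\,\Psi^{-1}\!\left(\frac{4B}{d_n^2}\right)\bigl(p(d_n)-p(d_{n+1})\bigr),
\end{equation*}
with an analogous estimate for $|g(s_n)-g(s_{n+1})|$.

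Then I would convert the telescoping sum into the desired Stieltjes integral. Since $\Psi^{-1}$ is non-decreasing, on $[d_{n+1},d_n]$ we have $\Psi^{-1}(4B/u^2)\ge\Psi^{-1}(4B/d_n^2)$, so
\begin{equation*}
\Psi^{-1}\!\left(\frac{4B}{d_n^2}\right)\bigl(p(d_n)-p(d_{n+1})\bigr)\le\int_{d_{n+1}}^{d_n}\Psi^{-1}\!\left(\frac{4B}{u^2}\right)dp(u),
\end{equation*}
and summing over $n\ge 0$, using continuity of $p$ at $0$, telescopes to $\int_0^{d}\Psi^{-1}(4B/u^2)\,dp(u)$. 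Continuity of $g$ ensures $g(t_n)\to g(y)$ and $g(s_n)\to g(z)$, so $|g(z)-g(y)|\le\sum_n|g(t_n)-g(t_{n+1})|+\sum_n|g(s_n)-g(s_{n+1})|$, and tracking constants from the two Chebyshev factors and the two branches yields the final bound with the factor $8$.

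The main technical obstacle will be engineering the iteration so that the increment bound at each stage carries the telescoping difference $p(d_n)-p(d_{n+1})$ rather than the larger $p(d_n)$; this requires placing each $t_{n+1}$ not only in the correct window but in a sub-window whose distance to $t_n$ lies in the annulus $[d_{n+1},d_n]$, and is exactly the step that produces the non-trivial conversion from the discrete dyadic sum to the Stieltjes integral $\int_0^{|z-y|}\Psi^{-1}(4B/u^2)\,dp(u)$. The assumed continuity of $p$ (hence absence of atoms in $dp$) makes this conversion clean and allows $\lim_n p(d_n)=0$.
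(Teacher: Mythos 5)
The paper does not prove this lemma at all: it is quoted from Garsia--Rodemich--Rumsey \cite{garsia1970real} and used as a black box, so there is no in-paper argument to compare against. Judged on its own, your outline follows the right classical strategy (the marginal function $I(t)$, a Chebyshev selection principle, two sequences converging to the endpoints, telescoping into a Stieltjes integral), but it has a genuine gap at precisely the step you yourself flag as the main obstacle, and the fix you propose does not close it. With $d_n=d/2^n$ chosen geometrically in the \emph{argument}, the increment bound the selection actually delivers is $|g(t_n)-g(t_{n+1})|\le \Psi^{-1}(\cdot)\,p(|t_n-t_{n+1}|)$ with $p(|t_n-t_{n+1}|)\le p(d_n)$, and $p(d_n)$ cannot in general be dominated by a constant multiple of $p(d_n)-p(d_{n+1})$: the hypotheses allow $p$ to be constant on $[d_{n+1},d_n]$ (continuity does not forbid local constancy), in which case the right-hand side of your claimed bound vanishes while the left-hand side need not. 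Forcing $|t_n-t_{n+1}|$ into the annulus $[d_{n+1},d_n]$ only yields $p(|t_n-t_{n+1}|)\ge p(d_{n+1})$, which makes matters worse, and it also conflicts with the measure-theoretic selection (the good sets occupy most of a window of length $d_{n+1}$, not of a prescribed annulus). The classical proof resolves this by taking the scales geometric in the \emph{value} of $p$: one defines $d_{n+1}$ by $p(d_{n+1})=\tfrac12 p(t_n)$, so that $p(d_{n+1})\le\tfrac12 p(d_n)$ and hence $p(t_{n-1}-t_n)\le p(t_{n-1})=2p(d_n)\le 4\,(p(d_n)-p(d_{n+1}))$; only then is the conversion to $\int_{d_{n+1}}^{d_n}\Psi^{-1}(4B/u^2)\,dp(u)$ legitimate for an arbitrary nondecreasing continuous $p$.

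A secondary issue is the bookkeeping of constants. Your thresholds $4B/\ell$ and $4I(t)/\ell'$ lead to $\Psi(\cdot)\le 16B/(d_nd_{n+1})$, and since $16B/(d_nd_{n+1})\ge 4B/u^2$ for $u\le d_n$, the comparison with $\Psi^{-1}(4B/u^2)$ goes the wrong way; the standard choice is $2B/\ell$ and $2I/\ell'$, with each bad set of measure at most $\ell/2$, combined with proving the endpoint case $y=0$, $z=1$ first and then rescaling $[y,z]$ to $[0,1]$ (which replaces $B$ by $B/(z-y)^2$) rather than running the dyadic construction directly inside $[y,z]$. As written, even after repairing the telescoping you would obtain a bound of the form $C\int_0^{|z-y|}\Psi^{-1}(C'B/u^2)\,dp(u)$ rather than the stated $8\int_0^{|z-y|}\Psi^{-1}(4B/u^2)\,dp(u)$; that weaker form would still suffice for the applications in this paper (where $\Psi(u)=|u|^{p}$), but it is not a proof of the lemma as stated.
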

\begin{proof}[Proof of Theorem \ref{B0 equaal b0}]
  We just need to prove that $C(I;\R^d)\cap \mathbf{B}^{\nu,0}_{p,\infty}(I,\R^d)\subseteq \mathbf{b}^{\nu,0}_{p,\infty}(I;\R^d)$, since the second inclusion is trivial.
  Let $f=(f_1,\ldots,f_d)\in C(I;\R^d)\cap \mathbf{B}^{\nu,0}_{p,\infty}(I,\R^d)$, $\Psi(u)=|u|^p$, and $p(u)=|u|^{\nu+\frac{\beta}{p}}$, where $\beta\in [0,1)$ such that $\nu p-1>1-\beta$. Hence by Fubini's theorem, changes of variables, and the fact that $f\in\mathbf{B}^{\nu}_{p,\infty}(I,\R^d)$, we have for any $1\leq i\leq d$,
  \begin{equation}\label{Bijx}
    B(i,j,x):=\int_{0}^{1}\int_{0}^{1}\frac{|f_i(2^{-j}v+x)-f_i(2^{-j}w+x)|^p}{|v-w|^{\nu p+\beta}}dv\,dw<\infty.
  \end{equation}
  Therefore, by Lemma \ref{GRR} we get for all $1\leq i\leq d$ and $z,y\in [0,1]$,
  \begin{align*}
    &|f_i(2^{-j}z+x)-f_i(2^{-j}y+x)|^p \\
    &\leq C_{\nu,\beta,p} |z-y|^{\nu p+\beta-2} \int_{0}^{1}\int_{0}^{1}\frac{|f_i(2^{-j}v+x)-f_i(2^{-j}w+x)|^p}{|v-w|^{\nu p+\beta}}dv\,dw\\
    &\leq C_{\nu,\beta,p} \int_{0}^{1}\int_{0}^{1}\frac{|f_i(2^{-j}v+x)-f_i(2^{-j}w+x)|^p}{|v-w|^{\nu p+\beta}}dv\,dw,
  \end{align*}
  where $C_{\nu,\beta,p}= 8^p 4 p^p/(\beta+p\nu-2)^p$. Hence, for all $1\leq i\leq d$,
  \begin{equation*}
    \begin{split}
        &\sup_{z,y\in [0,1]}|f_i(2^{-j}z+x)-f_i(2^{-j}y+x)|^p \\
        &\leq C_{\nu,\beta,p} \int_{0}^{1}\int_{0}^{1}\frac{|f_i(2^{-j}v+x)-f_i(2^{-j}w+x)|^p}{|v-w|^{\nu p+\beta}}dv\,dw,
    \end{split}
  \end{equation*}
  Therefore, for all $1\leq i\leq d$,
  \begin{align}\label{app GRR}
        &2^{jp\nu}\int_{I(2^{-j})}\sup_{z,y\in [0,1]}|f_i(2^{-j}z+x)-f_i(2^{-j}y+x)|^p dx\nonumber\\
        &\leq C_{\nu,\beta,p} \int_{[0,1]^2}\frac{2^{jp\nu}}{|v-w|^{\nu p+\beta}}\int_{0}^{1-2^{-j}}|f_i(2^{-j}v+x)-f_i(2^{-j}w+x)|^p dx\,dv\,dw\nonumber\\
        &=C_{\nu,\beta,p} \int_{[0,1]^2}\frac{2^{jp\nu}}{|v-w|^{\nu p+\beta}}\int_{2^{-j}w}^{1-2^{-j}+2^{-j}w}|f_i(x+2^{-j}(v-w))-f_i(x)|^p dx\,dv\,dw\nonumber\\
        &\leq C_{\nu,\beta,p} \int_{[0,1]^2}\frac{2^{jp\nu}}{|v-w|^{\nu p+\beta}}\int_{I(2^{-j}(v-w))}|f_i(x+2^{-j}(v-w))-f_i(x)|^p dx\,dv\,dw,
  \end{align}
  where $I(2^{-j}(v-w))=\{x\in [0,1]\,;\;x+2^{-j}(v-w)\in [0,1]\}$. By the definition of $f\in \mathbf{B}^{\nu,0}_{p,\infty}(I,\R^d)$, we have for all $v,w\in [0,1]$ with $v\neq w$,
  $$\lim_{j\to \infty}\frac{2^{jp\nu}}{|v-w|^{\nu p+\beta}}\int_{I(2^{-j}(v-w))}|f_i(x+2^{-j}(v-w))-f_i(x)|^p dx=0.$$
  Therefore, By Lebesgue's dominated convergence theorem the right hand side of \eqref{app GRR} converges to $0$. Which concludes the proof of  Theorem \ref{B0 equaal b0}.
  \end{proof}
  \begin{rem}
    Let $\mathbf{B}^{\nu}_{p,q}(I,\R^d)$, for $1\leq p,q<\infty$ and $\nu\in (0,1)$, be the Besov spaces defined as follows:
    $$\mathbf{B}^{\nu}_{p,q}(I,\R^d):=\{f\in L^p(I,\R^d)\,;\;\|f\|_{\nu,p,q}<\infty\},$$
    where by \cite[3.b.9 Corollary]{konig1986eigenvalue} we have
    $$\|f\|_{\nu,p,q}:=\left\{\sum_{j\geq 0}2^{jq\nu}\|x\mapsto f(x+2^{-j})-f(x)\|^q_{L^p(I(2^{-j});\R^d)}\right\}^{\frac{1}{q}}.$$
    Therefore, we have the following continuous injection: for all $1\leq p,q<\infty$ and $\nu\in (0,1)$,
    \begin{equation}\label{injection Besov}
      \mathbf{B}^{\nu}_{p,q}(I,\R^d)\hookrightarrow \mathbf{B}^{\nu,0}_{p,\infty}(I,\R^d).
    \end{equation}
  \end{rem}
\subsection{The local times}\label{The local times}
This section is devoted to  give some aspects of the theory of local times. For more details on the subject, we refer to the survey of Geman and Horowitz  \cite{GemanHorowitz}.

Let $(\varphi_t)_{t\in [0,T]}$ be an $\R^d$-valued Borel function. For any Borel set $B\subseteq [0,T]$, the occupation measure of $\varphi$ on $B$ is given by the following measure on $\R^d$:
$$\nu_B(\bullet)=\lambda\{t\in B\,;\;\varphi_t\in \bullet\},$$
where $\lambda$ is the Lebesgue measure. When $\nu_B$ is absolutely continuous with respect to the Lebesgue measure on $\R^d$, $\lambda_d$, we say that the local time of $\varphi$ on $B$ exists and it is defined as the Radon-Nikodym derivative of $\nu_B$ with respect to $\lambda_d$, i.e., for almost every $x$,
$$L(x,B)=\frac{d\nu_B}{d\lambda_d}(x).$$
In the above, we call $B$ the time variable and $x$ the space variable. We write $L(x,t)$ and $L(x)$ instead of respectively  $L(x,[0,t])$ and $L(x,[0,T])$.

The local time fulfills the following occupation formula: for any Borel set $B\subseteq [0,T]$, and for every measurable bounded function $f:\R^d\to \R$,
\begin{equation}\label{occupation formula}
  \int_{B}f(\varphi_s)ds=\int_{\R^d}f(x)L(x,B)dx.
\end{equation}

The deterministic function $\varphi$ can be chosen to be the sample path of a separable stochastic process $(X_t)_{t\in [0,T]}$ with $X(0)=0$ a.s. In this regard, we state that the process $X$ has a local time (resp. square integrable local time) if for almost all $\omega$, the trajectory $t\mapsto X_t(\omega)$ has a local time (resp. square integrable local time).

We study the local time through Berman's approach. The idea is to derive properties of the local time, $L(\bullet,B)$, from the integrability properties of the Fourier transform of the sample paths of the process $X$.

Let us state the following hypotheses: \\
\textbf{A1}\;$$\int_{\R^d}\int_{0}^{T}\int_{0}^{T}\E\left[e^{i\left<u,X_t-X_s\right>}\right]dt\,ds\,du<\infty,$$
where $\left<\cdot,\cdot\right>$ is the Euclidean inner product on $\R^d$.\\
\textbf{A2}\; For every even integer $m\geq 2$,
$$\int_{(\R^d)^m}\int_{[0,T]^m}\left|\E\left[\exp\left(i\sum_{j=1}^{m}\left<u_j,X_{t_j}\right>\right)\right]\right|\prod_{j=1}^{m}dt_j\prod_{j=1}^{m}du_j<\infty.$$

Recall the following essential  result that we can find in \cite{Berman69a}:
\begin{thm}
  Assume \normalfont{\textbf{A1}}. Hence the process $X$ has a square integrable local time. Furthermore, we have almost surely, for all Borel set $B\subseteq [0,T]$, and for almost every $x$,
  \begin{equation}\label{local time L2}
    L(x,B)=\frac{1}{(2\pi)^d}\int_{\R^d}e^{-i\left<u,x\right>}\int_B e^{i\left<u,X_t\right>}dt\,du.
  \end{equation}
\end{thm}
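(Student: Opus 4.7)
My plan is to identify the right-hand side of \eqref{local time L2} as the $L^2$ Fourier inversion formula applied to the (random) occupation measure $\nu_{[0,T]}$, with hypothesis \textbf{A1} serving precisely to place the Fourier transform of $\nu_{[0,T]}$ in $L^2(\R^d)$ almost surely.

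First I would compute the Fourier transform of the occupation measure. By the occupation formula \eqref{occupation formula} applied to $f(y)=e^{i\left<u,y\right>}$,
$$\widehat{\nu_{[0,T]}}(u)\;=\;\int_{\R^d}e^{i\left<u,x\right>}\nu_{[0,T]}(dx)\;=\;\int_0^T e^{i\left<u,X_t\right>}\,dt.$$
Squaring gives $|\widehat{\nu_{[0,T]}}(u)|^2=\int_0^T\!\int_0^T e^{i\left<u,X_t-X_s\right>}\,ds\,dt$, a non-negative real quantity. Taking expectations and integrating in $u$, Fubini then yields
$$\E\!\int_{\R^d}|\widehat{\nu_{[0,T]}}(u)|^2\,du\;=\;\int_{\R^d}\!\int_0^T\!\!\int_0^T\E\!\left[e^{i\left<u,X_t-X_s\right>}\right]ds\,dt\,du\;<\;\infty,$$
by \textbf{A1}. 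Consequently $\widehat{\nu_{[0,T]}}\in L^2(\R^d)$ almost surely, and Plancherel's theorem guarantees that the finite measure $\nu_{[0,T]}$ is absolutely continuous with respect to $\lambda_d$, its density $L(\cdot,[0,T])$ lying in $L^2(\R^d)$. This is the square-integrable local time.

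For a generic Borel $B\subseteq[0,T]$ I would use the pointwise inequality $\nu_B\le \nu_{[0,T]}$ of measures on $\R^d$ to deduce that $\nu_B$ is also absolutely continuous, with density $L(\cdot,B)\le L(\cdot,[0,T])$ in $L^2(\R^d)$. By the same occupation-formula computation, its Fourier transform is $\widehat{\nu_B}(u)=\int_B e^{i\left<u,X_t\right>}dt$. Applying the $L^2$ Fourier inversion theorem to the pair $(L(\cdot,B),\widehat{\nu_B})$ then produces, for almost every $x$, the identity
$$L(x,B)\;=\;\frac{1}{(2\pi)^d}\int_{\R^d}e^{-i\left<u,x\right>}\int_B e^{i\left<u,X_t\right>}\,dt\,du,$$
which is \eqref{local time L2}.

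The point requiring the most care is not the algebra but the interpretation: the outer integral over $\R^d$ is not absolutely convergent in general, and must be read as an $L^2(\R^d)$-limit of truncations $\{|u|\le R\}$, which is the standard statement of Plancherel. The ``almost surely, for all Borel $B$'' phrasing is then handled by working on the single full-probability event $\{\widehat{\nu_{[0,T]}}\in L^2(\R^d)\}$: on this event the Radon--Nikodym derivatives $L(\cdot,B)=d\nu_B/d\lambda_d$ are simultaneously defined for every Borel $B\subseteq[0,T]$ by the domination argument, and the $L^2$ inversion applies to each such $B$ with the exceptional $x$-null set depending on $B$.
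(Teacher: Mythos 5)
Your proof is correct and follows essentially the same route as the paper, which states this result with a citation to Berman's classical Fourier-analytic argument: \textbf{A1} and Fubini--Tonelli give $\widehat{\nu_{[0,T]}}\in L^2(\R^d)$ almost surely, Plancherel then yields an $L^2$ occupation density, and the domination $\nu_B\le\nu_{[0,T]}$ plus $L^2$ Fourier inversion (with the $u$-integral read as an $L^2$-limit of truncations, exactly as the paper's subsequent $L_N$ construction does) gives \eqref{local time L2} for every Borel $B$ with the $x$-null set depending on $B$. One cosmetic remark: compute $\widehat{\nu_B}(u)=\int_B e^{i\left<u,X_t\right>}dt$ directly from the definition of the occupation measure as a pushforward (change of variables for image measures), rather than from \eqref{occupation formula}, which as stated in the paper already presupposes the existence of the local time.
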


Remark that $L(x,B)$, given by \eqref{local time L2}, is not a stochastic process. We will follow Berman \cite{Berman69b} to create  a version of the local time, which is a stochastic process.
The following theorem is given in Berman \cite[Theorem 4.1]{Berman69b} for $d=1$ and $m=2$, so we will omit its proof.
\begin{thm}
  Assume \normalfont{\textbf{A1}} and \normalfont{\textbf{A2}}. Put for all integer $N\geq 1$,
  $$L_N(x,t)=\frac{1}{(2\pi)^d}\int_{[-N,N]^d}e^{-i\left<u,x\right>}\int_{0}^{t} e^{i\left<u,X_s\right>}ds\,du.$$
  Therefore, there exists a stochastic process $\tilde{L}(x,t)$ separable in the $x$-variable, such that for each even integer $m\geq 2$,
  \begin{equation}\label{L tilde}
    \lim_{N\to \infty}\sup_{(x,t)\in \R^d\times [0,T]}\E\left[|L_N(x,t)-\tilde{L}(x,t)|^m\right]=0.
  \end{equation}
\end{thm}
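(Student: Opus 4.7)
The plan is to establish that $(L_N(x,t))_N$ is Cauchy in $L^m(\Omega)$ uniformly in $(x,t)\in\R^d\times[0,T]$, identify the pointwise $L^m$-limit, and then extract a separable modification in the $x$-variable via Doob's separability theorem.

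For the Cauchy step, a symmetry argument (send $u\mapsto-u$ and use complex conjugation) shows that $L_N(x,t)$ is real-valued, so for even $m$ we have $|L_N-L_M|^m=(L_N-L_M)^m$. Fix integers $N>M\geq 1$ and set $D_{N,M}=[-N,N]^d\setminus[-M,M]^d$. Expanding the $m$-th power via Fubini and taking expectations gives
\[
\E\bigl[|L_N(x,t)-L_M(x,t)|^m\bigr]=\frac{1}{(2\pi)^{dm}}\int_{D_{N,M}^m}\!\!\int_{[0,t]^m} e^{-i\left<\sum_j u_j,\,x\right>}\E\bigl[e^{i\sum_j\left<u_j,X_{s_j}\right>}\bigr]\prod_{j=1}^m ds_j\,du_j.
\]
Passing to moduli inside the integral and invoking hypothesis \textbf{A2}, which provides an integrable majorant on all of $(\R^d)^m\times[0,T]^m$ independent of $(N,M,x,t)$, dominated convergence drives the right-hand side to $0$ as $M,N\to\infty$, uniformly in $(x,t)$. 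Denoting by $L^{\ast}(x,t)$ the resulting pointwise $L^m$-limit, one therefore obtains
\[
\sup_{(x,t)\in\R^d\times[0,T]}\E\bigl[|L_N(x,t)-L^{\ast}(x,t)|^m\bigr]\xrightarrow[N\to\infty]{}0.
\]

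For the separability step, view $x\mapsto L^{\ast}(x,t)$, for each fixed $t$, as a real-valued random field indexed by $\R^d$, and produce a separable modification $\tilde{L}(x,t)$ by Doob's separability theorem, using a single countable dense set $D\subset\R^d$ valid for every $t$. By construction $\tilde{L}(x,t)=L^{\ast}(x,t)$ almost surely for each fixed $(x,t)$, and this pointwise a.s.\ equality preserves the uniform $L^m$-bound, yielding \eqref{L tilde}. The main delicate point is not the Cauchy computation, which is a direct application of \textbf{A2}, but packaging the separability uniformly in $t$ so that $\tilde{L}$ becomes a bona fide process on $\R^d\times[0,T]$ rather than a disconnected family of modifications; following Berman, one first defines $\tilde{L}$ on $D\times[0,T]$ by $\tilde{L}=L^{\ast}$ and then extends to $\R^d\times[0,T]$ by the separability prescription, which preserves pointwise a.s.\ identification with $L^{\ast}$ and hence \eqref{L tilde}.
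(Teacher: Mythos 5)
Your argument is correct and is essentially the proof the paper has in mind: the paper omits the proof, citing Berman's Theorem 4.1 (the case $d=1$, $m=2$), and your uniform $L^m$-Cauchy estimate over $D_{N,M}^m$ driven by \textbf{A2}, followed by passage to a separable modification, is exactly that argument extended to general $d$ and even $m$. The only point worth noting is that fixing a single countable separant $D$ for all $t$ is legitimate here because \textbf{A2} with $m=2$ and dominated convergence give continuity in probability of $x\mapsto L^{\ast}(x,t)$; alternatively, applying Doob's separability theorem separately for each fixed $t$ already yields everything the statement requires, since separability is only asserted in the $x$-variable.
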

\begin{thm}[Theorem 4.3 in \cite{Berman69b}]
  Let $\tilde{L}(x,t)$ be given by $\eqref{L tilde}$.
  If the stochastic process $\{\tilde{L}(x,t),\, x\in\R^d\}$ is almost surely continuous, hence it is a continuous (in the $x$-variable) version of the local time on $[0,t]$.
\end{thm}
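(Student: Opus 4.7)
The plan is to verify almost surely the occupation formula \eqref{occupation formula} with $L(x,t)$ replaced by $\tilde L(x,t)$, after which essential uniqueness of the Radon--Nikodym derivative combined with the assumed almost sure continuity of $\tilde L(\cdot,t)$ in $x$ will force $\tilde L(\cdot,t)$ to be the sought continuous version of the local time on $[0,t]$.

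First, I would test $L_N(\cdot,t)$ against a Schwartz function $f:\R^d\to\R$. Since $f\in L^1(\R^d)$ and $[0,t]\times[-N,N]^d$ has finite Lebesgue measure, Fubini's theorem yields
\begin{equation*}
  \int_{\R^d}f(x)L_N(x,t)\,dx=\frac{1}{(2\pi)^d}\int_0^t\int_{[-N,N]^d}\hat f(u)\,e^{i\langle u,X_s\rangle}\,du\,ds,
\end{equation*}
with $\hat f(u)=\int_{\R^d}f(x)e^{-i\langle u,x\rangle}\,dx$. Because $\hat f\in L^1(\R^d)$, dominated convergence together with the Fourier inversion theorem shows that as $N\to\infty$ the right-hand side converges pathwise to $\int_0^t f(X_s)\,ds$.

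Second, I would transport this convergence to the $\tilde L$ side using \eqref{L tilde} with $m=2$. Jensen and Cauchy--Schwarz give
\begin{equation*}
  \E\Bigl[\Bigl|\int_{\R^d}f(x)\{L_N(x,t)-\tilde L(x,t)\}\,dx\Bigr|\Bigr]\le\|f\|_{L^1}\sup_{(x,s)\in\R^d\times[0,T]}\E[|L_N(x,s)-\tilde L(x,s)|^2]^{1/2},
\end{equation*}
and the supremum tends to $0$ by \eqref{L tilde}. Hence $\int f(x)L_N(x,t)\,dx\to\int f(x)\tilde L(x,t)\,dx$ in $L^1(\Omega)$; extracting a subsequence, this convergence is almost sure, and comparison with the previous limit yields
\begin{equation*}
  \int_{\R^d}f(x)\tilde L(x,t)\,dx=\int_0^t f(X_s)\,ds\quad\text{almost surely.}
\end{equation*}

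Finally, picking a countable family $\{f_n\}_{n\geq 1}$ dense in $C_c(\R^d)$ for the $L^1$-norm, the identity above holds simultaneously for every $f_n$ off a single null set. On the intersection with the event of continuity of $\tilde L(\cdot,t)$, the occupation formula \eqref{occupation formula} applied to $L(\cdot,t)$ gives $\int f_n(x)\tilde L(x,t)\,dx=\int f_n(x)L(x,t)\,dx$ for every $n$, hence $\tilde L(\cdot,t)=L(\cdot,t)$ Lebesgue-a.e.\ on $\R^d$; continuity of $\tilde L(\cdot,t)$ then selects the distinguished continuous representative of $L(\cdot,t)$, which is the conclusion. The main obstacle I anticipate is the bookkeeping of null sets: collecting the $L^1(\Omega)$ subsequences and exceptional sets attached to each $f_n$ into one universal null event, and this is precisely where the uniformity in $(x,t)$ in \eqref{L tilde} plays its role by dispensing with any decay hypothesis on $f$ beyond integrability.
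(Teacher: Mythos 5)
Your argument is correct: passing the occupation formula through $L_N$ via Fubini and Fourier inversion, transporting it to $\tilde L$ with the $m=2$ case of \eqref{L tilde}, and then identifying $\tilde L(\cdot,t)$ with $L(\cdot,t)$ a.e.\ through a countable dense family of test functions and the assumed continuity is exactly the standard route, and the null-set bookkeeping you flag is handled by countability as you say. The paper itself offers no proof of this statement (it is quoted as Theorem 4.3 of \cite{Berman69b}), and your proof is essentially Berman's original argument, so there is nothing to reconcile beyond noting that your write-up is a sound self-contained substitute for the citation.
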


In order to prove Theorem \ref{thm Besov regularity of L},  we will need to estimate the moments of the increments of $\tilde{L}$. For this end, we have by \eqref{L tilde}, for all $x,y\in \R^d$, $t, h\in [0,T]$ such that $t+h\in [0,T]$, and even integer $m\geq 2$,
\begin{align}\label{E Lxkth Lxth Lxkt Lxt}
        &\E[\tilde{L}(x+y,t+h)-\tilde{L}(x,t+h)-\tilde{L}(x+y,t)+\tilde{L}(x,t)]^m=\frac{1}{(2\pi)^{md}} \nonumber\\
         & \times\int_{(\R^d)^m}\int_{[t,t+h]^m}\prod_{j=1}^{m}\left(e^{-i\left<u_j,x+y\right>}-e^{-i\left<u_j,x\right>}\right)\E\left[e^{i\sum_{j=1}^{m}\left<u_j,X_{t_j}\right>}\right]\prod_{j=1}^{m}dt_j\prod_{j=1}^{m}du_j\nonumber\\
         &=\frac{1}{(2\pi)^{md}}\int_{(\R^d)^m}\int_{[t,t+h]^m}\prod_{j=1}^{m}\left(e^{-i\left<v_j-v_{j+1},x+y\right>}-e^{-i\left<v_j-v_{j+1},x\right>}\right)\nonumber\\
&\qquad\qquad\qquad\qquad\qquad\qquad\times\E\left[e^{i\sum_{j=1}^{m}\left<v_j,X_{t_j}-X_{t_{j-1}}\right>}\right]\prod_{j=1}^{m}dt_j\prod_{j=1}^{m}dv_j,
\end{align}
and
\begin{equation}\label{E Lxth Lxt}
    \begin{split}
        &\E[\tilde{L}(x,t+h)-\tilde{L}(x,t)]^m\\
        &=\frac{1}{(2\pi)^{md}} \int_{(\R^d)^m}\int_{[t,t+h]^m}e^{-i\sum_{j=1}^{m}\left<u_j,x\right>}\,\E\left[e^{i\sum_{j=1}^{m}\left<u_j,X_{t_j}\right>}\right]\prod_{j=1}^{m}dt_j\prod_{j=1}^{m}du_j\\
        &=\frac{1}{(2\pi)^{md}} \int_{(\R^d)^m}\int_{[t,t+h]^m}e^{-i\left<v_1,x\right>}\,\E\left[e^{i\sum_{j=1}^{m}\left<v_j,X_{t_j}-X_{t_{j-1}}\right>}\right]\prod_{j=1}^{m}dt_j\prod_{j=1}^{m}dv_j,
    \end{split}
\end{equation}
where $t_0=0$, and the last equality in \eqref{E Lxkth Lxth Lxkt Lxt} (resp. \eqref{E Lxth Lxt}) holds through suitable changes of variables as follows:
%by the following changes of variables:
$$u_j=v_j-v_{j+1},\qquad j=1,\cdots,m,\qquad\text{with}\qquad v_{m+1}=0.$$

In order to estimate \eqref{E Lxkth Lxth Lxkt Lxt} and \eqref{E Lxth Lxt}, we need first to manage the following characteristic function $\E\left[e^{i\sum_{j=1}^{m}\left<v_j,X_{t_j}-X_{t_{j-1}}\right>}\right]$. Therefore, we will use the bellow  condition called $\alpha$-local nondeterminism ($\alpha$-LND), which was introduced for the first time in \cite{boufoussi2021local}.
\begin{defn}\label{alphha LND}
  Let $X=(X_t)_{t\in [0,T]}$ be an $\R^d$-valued stochastic process, $J$ a subinterval of $[0,T]$ and $\alpha\in (0,1)$ . $X$ is said to satisfy the $\alpha$-LND property on $J$, if for every non-negative integers $m\geq 2$, and $k_{j,l}$, for $j=1,\cdots,m$, $l=1,\cdots,d$, there exist positive constants $c$ and $\varepsilon$, both may depend on $m$ and $k_{j,l}$, such that
  \begin{equation}\label{Ineq alphha LND}
    %\begin{split}
        \left|\E\left[e^{i\sum_{j=1}^{m}\left<v_j,X_{t_j}-X_{t_{j-1}}\right>}\right]\right|
         \leq\frac{c}{\prod_{j=1}^{m}\prod_{l=1}^{d}|v_{j,l}|^{k_{j,l}}(t_j-t_{j-1})^{\alpha k_{j,l}}},
    %\end{split}
  \end{equation}
  for all $v_j=(v_{j, l}\,;\;1\leq l\leq d)\in (\R\setminus\{0\})^{d}$, for $j=1,\cdots,m$, and for every ordered points $t_1<\cdots<t_m$ in $J$ with $t_m-t_1<\varepsilon$ and $t_0=0$.
\end{defn}
\begin{rem}\label{rem alphha LND}
\begin{description}
\item[(i)]  It is well-known that the concept of local nondeterminism in the Gaussian framework means that ``the value of the process at a given time point is relatively unpredictable based on a finite set of observations from the immediate past". In the Gaussian context, Berman uses conditional variance to express this. But unfortunately, he can't use the conditional variance outside the Gaussian case because in a general framework  the conditional variance is not deterministic. So, Berman has introduced the local $g$-nondeterminism concept for general processes by replacing the incremental variance, which is a measure of local unpredictability, by a measure of local
predictability, namely, the value of the incremental density function at the origin, see \cite[Definition 5.1]{Bermangeneral1983}. By the Fourier inversion theorem, it is easy to see that the condition in Definition \ref{alphha LND} implies  the local $g$-nondeterminism condition. On the other hand, Nolan has introduced the notion of \textit{characteristic function locally approximately independent increments} (see \cite[Definition 3.1]{nolan1989local}), which is equivalent in the Gaussian and stable context to the classical LND condition. The condition in Definition \ref{alphha LND} ($d=1$) is an extension of Nolan's notion by replacing the characteristic functions $\left|\E\left[e^{ic_m u_{j}(X(t_j)-X(t_{j-1}))}\right]\right|$ in the right-hand side of \cite[Ineq. (3.3)]{nolan1989local} by $c\,|u_{j}|^{-k_{j}}(t_j-t_{j-1})^{-\alpha k_{j}}$.
%For all these reasons, we choose to call the condition in Definition \ref{alphha LND} by $\alpha$-local nondeterminism ($\alpha$-LND).
  \item[(ii)] Although the $\alpha$-LND has been specifically created for non-Gaussian processes, a large class of Gaussian processes possesses this property.  Let $Y^0=(Y^0_t)_{t\in [0,T]}$ be a real-valued centred Gaussian process satisfying the classical local nondeterminism (LND) property on $J\subseteq [0,T]$ (see \cite[Lemma 2.3]{Berman73}). Assume also that
  there exists a positive constant $K$, such that for every $s,t\in J$ with $s<t$,
  \begin{equation*}
    K(t-s)^{2\alpha}\leq\var\left(Y^0_{t}-Y^0_{s}\right).
  \end{equation*}
  Define $Y_t=(Y^1_t,\cdots,Y^d_t),$ where $Y^1,\cdots,Y^d$ are independent copies of $Y^0$. Then $Y$ is $\alpha$-LND on $J$.
  \item[(iii)] The question of whether or not the $\alpha$-LND is strictly weaker than the classical local
nondeterminism in the Gaussian framework is an open problem that is currently the subject of our  investigation. 

\end{description}
\end{rem}
\section{Proof of the main results}
Our aim in this section is to prove Theorem \ref{thm Besov regularity of L},  \ref{thm non Besov X}, and \ref{Adler Besov}. We will need first the following preliminary lemmas.
\begin{lem}\label{lem Veraar form norm Besov}
Let $1\leq p<\infty$, $0<\nu<1$, and $I=[0,1]$. Then, for all $\R$-valued  jointly continuous function $g$ with compact support defined on $\R^d\times [0,1]$,
        \begin{equation}\label{modulus uniform equvalence222}
        \begin{split}
           & \sup_{j\geq 0} 2^{jp\nu}\|r\mapsto \sup_{x\in \R^d}|g(x,r+2^{-j})-g(x,r)|\|^p_{L^p(I(2^{-j});\R)} \\
           & =\sup_{j\geq 0}2^{jp\nu-j} \int_{0}^{1} \sum_{k=1}^{2^j-1}\sup_{x\in \R^d}|g(x,2^{-j}(s+k))-g(x,2^{-j}(s+k-1))|^p ds.
           \end{split}
        \end{equation}
\end{lem}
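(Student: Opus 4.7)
The plan is to expand the $L^p$-norm on the left-hand side, partition the domain $I(2^{-j})=[0,1-2^{-j}]$ into its $2^j-1$ natural dyadic subintervals of length $2^{-j}$, apply an affine change of variables on each piece to pull it back to $[0,1]$, and then interchange the (finite) sum with the integral. Taking the supremum over $j\geq 0$ on both sides gives the claimed equality.

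In more detail, I would first unpack the $L^p$-norm: for each fixed $j\geq 0$,
\begin{equation*}
\|r\mapsto \sup_{x\in\R^d}|g(x,r+2^{-j})-g(x,r)|\|^p_{L^p(I(2^{-j});\R)}
=\int_{0}^{1-2^{-j}}\sup_{x\in\R^d}|g(x,r+2^{-j})-g(x,r)|^p\,dr.
\end{equation*}
Since $1-2^{-j}=(2^j-1)2^{-j}$, I would decompose the interval of integration as
\begin{equation*}
[0,1-2^{-j}]=\bigcup_{k=1}^{2^j-1}\bigl[(k-1)2^{-j},\,k\,2^{-j}\bigr],
\end{equation*}
and on the $k$-th piece perform the change of variables $r=2^{-j}(s+k-1)$, $s\in[0,1]$, so that $r+2^{-j}=2^{-j}(s+k)$ and $dr=2^{-j}\,ds$. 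This turns each piece into $2^{-j}\int_0^1 \sup_x |g(x,2^{-j}(s+k))-g(x,2^{-j}(s+k-1))|^p\,ds$.

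Next, since the sum over $k$ is finite (at most $2^j-1$ terms) and the integrand is jointly continuous with compact support, Fubini is not even required: I would simply swap the finite sum with the integral to get
\begin{equation*}
\int_{0}^{1-2^{-j}}\!\!\sup_x|g(x,r+2^{-j})-g(x,r)|^p\,dr
=2^{-j}\!\int_{0}^{1}\sum_{k=1}^{2^j-1}\sup_x|g(x,2^{-j}(s+k))-g(x,2^{-j}(s+k-1))|^p ds.
\end{equation*}
Multiplying by $2^{jp\nu}$ and taking the supremum over $j\geq 0$ on both sides yields \eqref{modulus uniform equvalence222}.

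The only potentially subtle point is measurability and integrability of $r\mapsto \sup_{x\in\R^d}|g(x,r+2^{-j})-g(x,r)|$, but this is immediate from the joint continuity of $g$ together with its compact support, which makes the supremum in $x$ a continuous (hence measurable and bounded on compact intervals) function of $r$. Beyond that, the argument is a purely elementary partition-and-rescale computation, so I do not anticipate a genuine obstacle.
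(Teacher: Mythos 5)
Your proof is correct and follows essentially the same route as the paper: decompose $[0,1-2^{-j}]$ into the $2^j-1$ dyadic subintervals, apply the affine change of variables on each piece, and take the supremum over $j$. No issues.
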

\begin{proof}
  Denote
\begin{equation*}
  Z_{j}=2^{jp\nu}\|r\mapsto \sup_{x\in \R^d}|g(x,r+2^{-j})-g(x,r)|\|^p_{L^p(I(2^{-j});\R)},
\end{equation*}
here $I(2^{-j})=\{t\in [0,1]\,;\;t+2^{-j}\in [0,1]\}$. We have
\begin{align*}
   Z_{j} &= 2^{jp\nu}\int_{0}^{1-2^{-j}}\,\sup_{x\in \R^d}|g(x,r+2^{-j})-g(x,r)|^p dr\nonumber\\
   &= 2^{jp\nu}\sum_{k=1}^{2^j-1} \int_{(k-1)2^{-j}}^{k2^{-j}}\,\sup_{x\in \R^d}|g(x,r+2^{-j})-g(x,r)|^p dr \nonumber\\
   &=2^{jp\nu-j} \int_{0}^{1} \sum_{k=1}^{2^j-1}\sup_{x\in \R^d}|g(x,2^{-j}(s+k))-g(x,2^{-j}(s+k-1))|^p ds,
\end{align*}
where we have used the change of variables $s=2^j(r-2^{-j}(k-1))$. Which finishes the proof of Lemma \ref{lem Veraar form norm Besov}.
\end{proof}
\begin{lem}\label{lem chebechev local time}
Let $X=(X_t)_{t\in [0,1]}$ be an $\R^d$-valued continuous stochastic process which is $\alpha$-LND with $\alpha\in (0,\frac{1}{d})$.
 Denote by $L(x,t)$ the jointly continuous version of the local time of $X$, then
\begin{itemize}
  \item For any positive integer $q$, there exists a positive constant $K=K(q,d,\alpha)$ such that for all integer $j\geq 1$ and $x\in \R^d$,
  \begin{equation}\label{x plus X djk}
    \mathbb{P}\left[\sum_{k=1}^{2^j-1} |L(x+X(d_{j,k}),D_{j,k})|^q\geq 2^{-jq(1-d\alpha)+j} \right]\leq K 2^{-jd\alpha};
  \end{equation}
  \item Let $a$ and $q$ be positive integers and $0<\theta<\{(\frac{1}{\alpha}-d)/2\}\wedge 1$, then there exists a positive constant $K=K(q,a,d,\alpha,\theta)$, such that for all integers $j,h\geq 1$ and any $x,y\in \R^d$ and $\gamma>0$,
  \begin{equation}\label{x plus X djk y plus X djk}
     \mathbb{P}\left[\sum_{k=1}^{2^j-1} |A_{j,k,x,y}|^q\geq 2^{-jq(1-d\alpha-\theta\alpha)+j}
       \|x-y\|^{q\theta}2^{\gamma h} \right]\leq K 2^{-jd\alpha} 2^{-2^a\gamma h},
  \end{equation}
  \end{itemize}
  where $d_{j,k}=2^{-j}(k-1)$, $D_{j,k}=[2^{-j}(k-1),2^{-j}(k+1)]$, and
  \begin{equation}\label{Ajkx}
    A_{j,k,x,y}=L(x+X(d_{j,k}),D_{j,k})-L(y+X(d_{j,k}),D_{j,k}).
  \end{equation}
\end{lem}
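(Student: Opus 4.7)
Both inequalities are proved by Markov's inequality at a carefully chosen moment level, with moment estimates of the local time derived from \eqref{E Lxth Lxt}--\eqref{E Lxkth Lxth Lxkt Lxt} and the $\alpha$-LND of $X$.

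My first step is to reduce both quantities to the local time of a shifted process. Setting $Y^{(k)}_u := X_{u+d_{j,k}} - X_{d_{j,k}}$ for $u \in [0, 2^{-j+1}]$, an occupation-density change of variable gives the identity
\[
L(x+X(d_{j,k}),D_{j,k}) \;=\; L^{Y^{(k)}}(x,2^{-j+1}),
\]
so $A_{j,k,x,y}$ is the corresponding spatial increment of $L^{Y^{(k)}}$. Applying \eqref{E Lxth Lxt} to $Y^{(k)}$ reduces the $m$-th moment to an integral, over the ordered simplex, of the characteristic function $\E[e^{i \sum_l v_l (X_{s_l} - X_{s_{l-1}})}]$ with $s_0 = d_{j,k}$. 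I would control this by invoking the $\alpha$-LND of $X$ on the $(m+1)$-point configuration $\{0, d_{j,k}, s_1, \ldots, s_m\}\subset[0,1]$ with the first weight set to zero, which absorbs the initial shift $X_{d_{j,k}}$. Combined with the elementary inequality $|e^{-i\langle v,x\rangle} - e^{-i\langle v,y\rangle}| \le 2 \|v\|^\theta \|x-y\|^\theta$ in the increment case, a standard Berman-type computation then produces, for every even $m \ge 2$,
\[
\E\!\left[L(x+X(d_{j,k}),D_{j,k})^{m}\right] \le C_m\, 2^{-jm(1-d\alpha)}, \qquad \E[|A_{j,k,x,y}|^{m}] \le C_{m,\theta}\, \|x-y\|^{m\theta}\, 2^{-jm(1-d\alpha-\theta\alpha)},
\]
the second bound being valid for $0 < \theta < (1/\alpha - d)/2$.

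For the first inequality, summing the bound with $m=q$ over $k$ yields $\E\!\left[\sum_k L(x+X(d_{j,k}),D_{j,k})^q\right] \lesssim 2^{-jq(1-d\alpha)+j}$, which already matches the threshold, so direct Markov gives only a constant bound; the extra decay $2^{-jd\alpha}$ therefore must come from a second-moment (Chebyshev) argument. Computing $\E[(\sum_k L_k^q)^2]$ via the joint-density bound on $p_{Y^{(k)}_u, Y^{(k')}_v}(x,x)$ obtained from the $\alpha$-LND applied to the four pooled time points $\{d_{j,k}, d_{j,k}+u, d_{j,k'}, d_{j,k'}+v\}$, the diagonal contribution to $\mathrm{Var}(\sum_k L_k^q)$ scales like $2^{j(1-2q(1-d\alpha))}$, which is smaller than the squared mean by a factor $2^{-j}\le 2^{-jd\alpha}$; provided the off-diagonal covariances decouple enough in $|k-k'|$ to sum to a comparable quantity, Chebyshev yields the claimed bound. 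For the second inequality the same mechanism is combined with Markov at power $2^a$: raising the threshold to that power turns the $2^{\gamma h}$ inflation into $2^{-2^a\gamma h}$, and the $2^{-jd\alpha}$ again comes from the variance decoupling across well-separated blocks.

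\textbf{Main obstacle.} The delicate technical point is precisely the off-diagonal covariance estimate: I must show, by a careful application of the $\alpha$-LND to pooled time points from two different blocks, that the joint density $p_{Y^{(k)}_u, Y^{(k')}_v}(x,x)$ factorises sufficiently as $|k-k'|$ grows for the sum of off-diagonal covariances to remain of order $(\E\sum_k L_k^q)^2$, so that the $2^{-j}$ gain of the diagonal over the squared mean survives into the final probability bound.
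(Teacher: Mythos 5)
Your plan departs from the paper's proof at the decisive point, and as it stands it does not close. The paper uses no second\--moment or decoupling argument at all: its entire gain of $2^{-jd\alpha}$ comes from a single\--block moment bound that \emph{decays in $k$}. After passing to the shifted process it keeps the time origin $t_0=0$ in the characteristic function, so the $\alpha$-LND bound carries a factor $t_1^{-\alpha(d+\epsilon_1\theta)}$ with $t_1\in[2^{-j}(k-1),2^{-j}(k+1)]$, and integrating in $t_1$ yields $\E\left[|A_{j,k,x,y}|^{q2^a}\right]\le K\|x-y\|^{q2^a\theta}2^{-jq2^a(1-d\alpha-\alpha\theta)}(k-1)^{-\alpha d}$ for $k\ge 2$ (see \eqref{Ajkxy k biger than 2}). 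Feeding this into the Minkowski/H\"older step \eqref{Holder l} and using $\sum_{k\ge2}(k-1)^{-\alpha d/2^a}\lesssim 2^{j(1-\alpha d/2^a)}$ gives \eqref{Ajxy final}, and one application of Markov's inequality at the power $2^a$ then produces the factors $2^{-jd\alpha}$ and $2^{-2^a\gamma h}$ simultaneously; the first bullet is the same computation without the spatial increment. Your moment bounds are uniform in $k$, a direct consequence of anchoring at $s_0=d_{j,k}$ (note the paper instead anchors at $t_0=0$, and that anchoring is precisely the source of its $k$-decay; since your choice is the arguably more natural one for the randomly shifted level, this is a step you must reconcile rather than silently replace), and with uniform-in-$k$ moments the mechanism you propose cannot recover the missing $2^{-jd\alpha}$.

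Concretely, there are two gaps. First, Chebyshev controls $\mathbb{P}[S\ge \E S+\lambda]$, but your first-moment estimate only gives $\E S\le C_q\,2^{-jq(1-d\alpha)+j}$ with an unspecified constant $C_q$ that may exceed $1$, while the threshold in \eqref{x plus X djk} is exactly $2^{-jq(1-d\alpha)+j}$; relative to your bounds the exceedance event is not a deviation event, so no off-diagonal decoupling, however sharp, can make its probability small — you would need either a mean provably below the threshold by a definite margin or the paper's $k$-decaying single-block moments. Second, for \eqref{x plus X djk y plus X djk} the announced hybrid ``Markov at power $2^a$ plus variance decoupling'' is not yet an argument: Markov at power $2^a$ with uniform-in-$k$ moments yields only $K\,2^{-2^a\gamma h}$ without the factor $2^{-jd\alpha}$, while a second-moment bound cannot by itself produce the $2^{-2^a\gamma h}$ decay; to combine the two you would need a \emph{centred} $2^a$-th moment estimate with full decoupling across the $\asymp 2^j$ overlapping blocks, evaluated at two random levels — exactly the ``main obstacle'' you leave open. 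So the key ingredient of the paper's proof (the $(k-1)^{-\alpha d}$ decay in \eqref{Ajkxy k biger than 2}) is absent from your proposal, and the substitute you sketch is both unproven and, for the thresholds as stated, structurally insufficient.
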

\begin{proof}
  We only prove \eqref{x plus X djk y plus X djk}. We have by H\"{o}lder's inequality, for all positive integer $a$,
  \begin{align}\label{Holder l}
    \E\left[\left\{\sum_{k=1}^{2^j-1} |A_{j,k,x,y}|^q\right\}^{2^a}\right] &= \sum_{k_1,\ldots,k_{2^a}=1}^{2^j-1}\E\left[\prod_{i=1}^{2^a}|A_{j,k_i,x,y}|^q\right]\nonumber\\
    &\leq \left\{\sum_{k=1}^{2^j-1}\E\left[|A_{j,k,x,y}|^{q2^a}\right]^{1/2^a}\right\}^{2^a}.
  \end{align}
  On the other hand, let $Y_t = X(t)- X(d_{j,k})$. The occupation measure of $Y$ is just the occupation measure
of $X$ translated by the (random) constant $X(d_{j,k})$. Since the occupation measure of $X$
has a jointly continuous density, the occupation measure of $Y$ has also a jointly continuous density
given by $L_Y(x,t)=L_X(x+X(d_{j,k}),t)$. Put $m=q2^a$, hence by \eqref{Ajkx} and \eqref{E Lxkth Lxth Lxkt Lxt}  we obtain
  \begin{align}\label{Ajkxy equality}
     &\E\left[|A_{j,k,x,y}|^{m}\right] \nonumber \\
     &= E\left[|L(x+X(d_{j,k}),D_{j,k})-L(y+X(d_{j,k}),D_{j,k})|^{m}\right]\nonumber\\
     &= E\left[|L_Y(x,D_{j,k})-L_Y(y,D_{j,k})|^{m}\right]\nonumber\\
     &=\frac{1}{(2\pi)^{md}}\int_{(\R^d)^{m}}\int_{(D_{j,k})^{m}}\prod_{n=1}^{m}\left(e^{-i\left<v_n-v_{n+1},x\right>}-e^{-i\left<v_n-v_{n+1},y\right>}\right)\nonumber\\
     &\qquad\qquad\qquad\qquad\qquad\qquad\times\E\left[e^{i\sum_{n=1}^{m}\left<v_n,X_{t_n}-X_{t_{n-1}}\right>}\right]\prod_{n=1}^{m}dt_n\prod_{n=1}^{m}dv_n,
  \end{align}
  By \eqref{Ajkxy equality} and the elementary inequality $|1-e^{i\rho}|\leq 2^{1-\theta}|\rho|^{\theta}$ for any $0<\theta<1$ and $\rho\in \R$, we get
\begin{equation}\label{t t plus h xi xi plus y J}
        \E\left[|A_{j,k,x,y}|^{m}\right]
        \leq 2^{-m(d+\theta-1)}\pi^{-md}\|x-y\|^{m\theta}\mathcal{J}(m,\theta),
\end{equation}
where
\begin{equation*}
  \begin{split}
      &\mathcal{J}(m,\theta)  \\
       & =\int_{(D_{j,k})^m}\int_{(\R^d)^m}\prod_{n=1}^{m}\|v_n-v_{n+1}\|^{\theta}\left|\E\left[e^{i\sum_{n=1}^{m}\left<v_n,X_{t_n}-X_{t_{n-1}}\right>}\right]\right|\prod_{n=1}^{m}dv_n\prod_{n=1}^{m}dt_n.
  \end{split}
\end{equation*}
We replace the integration over the domain $(D_{j,k})^m$ by the integration over the
subset  $\Lambda_{j,k}=\{2^{-j}(k-1)\leq t_1<\cdots<t_m\leq 2^{-j}(k+1)\}$, hence we obtain
\begin{equation*}
  \begin{split}
      &\mathcal{J}(m,\theta)  \\
       & =m!\int_{\Lambda_{j,k}}\int_{(\R^d)^m}\prod_{n=1}^{m}\|v_n-v_{n+1}\|^{\theta}\left|\E\left[e^{i\sum_{n=1}^{m}\left<v_n,X_{t_n}-X_{t_{n-1}}\right>}\right]\right|\prod_{n=1}^{m}dv_n\prod_{n=1}^{m}dt_n,
  \end{split}
\end{equation*}
where $t_0=0$ and $v_{m+1}=0$. By the fact that $\|b-c\|^{\theta} \leq \|b\|^{\theta}+\|c\|^{\theta}$ for each $0<\theta<1$ and $b,c\in \R^d$, it follows that
\begin{equation}\label{}
  \prod_{n=1}^{m}\|v_n-v_{n+1}\|^{\theta} \leq  \prod_{n=1}^{m} \left(\|v_n\|^{\theta}+\|v_{n+1}\|^{\theta}\right).
\end{equation}
 Remark that the right side of this last inequality is at most equal to a finite sum of terms each of the
form $\prod_{n=1}^{m}\|v_n\|^{\epsilon_n \theta}$, where $\epsilon_n=0,1,$ or $2$ and $\sum_{n=1}^{m}\epsilon_n=m$. Hence
\begin{equation}\label{J without alpha LND}
  \begin{split}
      &\mathcal{J}(m,\theta)\leq m! \sum_{(\epsilon_1,\cdots,\epsilon_m)\in \{0,1,2\}^m}\int_{\Lambda_{j,k}}\int_{(\R^d)^m}\prod_{n=1}^{m}\|v_n\|^{\epsilon_n\theta}\\
       &\qquad\qquad\qquad\qquad\qquad\qquad\quad\times\left|\E\left[e^{i\sum_{n=1}^{m}\left<v_n,X_{t_n}-X_{t_{n-1}}\right>}\right]\right|\prod_{n=1}^{m}dv_n\prod_{n=1}^{m}dt_n.
  \end{split}
\end{equation}
On the other hand, by the $\alpha$-LND property of the process $X$, we get for every nonnegative integers $m\geq 2$, $k_{n,l}$, for $n=1,\cdots,m$ and $l=1,\cdots,d$, there exists a constant $c=c(m,k_{n,l})$ such that
  \begin{equation}\label{Ineq alphha LND for u 2}
    %\begin{split}
        \left|\E\left[e^{i\sum_{n=1}^{m}\left<v_n,X_{t_n}-X_{t_{n-1}}\right>}\right]\right|
         \leq\frac{c}{\prod_{n=1}^{m}\prod_{l=1}^{d}|v_{n,l}|^{k_{n,l}}(t_n-t_{n-1})^{\alpha k_{n,l}}},
    %\end{split}
  \end{equation}
where $v_n=(v_{n,1},\cdots,v_{n,d})$. Put $I^n_1=[-1/(t_n-t_{n-1})^{\alpha},1/(t_n-t_{n-1})^{\alpha}]$ and $I^n_2=\R\setminus I^n_1$, Therefore
\begin{equation}\label{R d m}
  (\R^d)^m=\bigcup_{i_{n,l}\in \{1,2\}\atop n=1,\cdots,m; l=1,\cdots,d}\prod_{n=1}^{m}\prod_{l=1}^{d} I^n_{i_{n,l}}.
\end{equation}
Set, for $n=1,\cdots,m$ and $l=1,\cdots,d$,
$$k_{n,l}(i_{n,l})=\left\{
  \begin{array}{ll}
    0, & \text{if}\quad i_{n,l}=1; \\
    4, & \text{if}\quad i_{n,l}=2,
  \end{array}
\right.$$
Hence, by \eqref{J without alpha LND}-\eqref{R d m}, we obtain
\begin{equation}\label{}
  \begin{split}
      \mathcal{J}(m,\theta)&\leq m!c\sum_{i_{n,l}\in \{1,2\}\atop n=1,\cdots,m; l=1,\cdots,d} \sum_{(\epsilon_1,\cdots,\epsilon_m)\in \{0,1,2\}^m}\int_{\Lambda_{j,k}}\int_{\prod_{n=1}^{m}\prod_{l=1}^{d} I^n_{i_{n,l}}}\\
       & \times \frac{\prod_{n=1}^{m}\|v_n\|^{\epsilon_n\theta}}{\prod_{n=1}^{m}\prod_{l=1}^{d}|v_{n,l}|^{k_{n,l}(i_{n,l})}(t_n-t_{n-1})^{\alpha k_{n,l}(i_{n,l})}}\prod_{n=1}^{m}dv_n\prod_{n=1}^{m}dt_n.
  \end{split}
\end{equation}
We remark that
\begin{equation*}
   \prod_{n=1}^{m}\|v_n\|^{\epsilon_n\theta} \leq \prod_{n=1}^{m} \left(|v_{n,1}|^{\epsilon_n\theta}+\cdots+|v_{n,d}|^{\epsilon_n\theta}\right) =\sum_{l_1,\cdots,l_d\in \{1,\cdots,d\}}\prod_{n=1}^{m} |v_{n,l_n}|^{\epsilon_n\theta}.
\end{equation*}
Therefore
\begin{equation*}
  \begin{split}
      \mathcal{J}(m,\theta)&\leq m!c\sum_{l_1,\cdots,l_d\in \{1,\cdots,d\}}\sum_{i_{n,l}\in \{1,2\}\atop n=1,\cdots,m; l=1,\cdots,d} \sum_{(\epsilon_1,\cdots,\epsilon_m)\in \{0,1,2\}^m}\int_{\Lambda_{j,k}}\int_{\prod_{n=1}^{m}\prod_{l=1}^{d} I^n_{i_{n,l}}} \\
       &\times \frac{\prod_{n=1}^{m} |v_{n,l_n}|^{\epsilon_n\theta}}{\prod_{n=1}^{m}\prod_{l=1}^{d}|v_{n,l}|^{k_{n,l}(i_{n,l})}(t_n-t_{n-1})^{\alpha k_{n,l}(i_{n,l})}}\prod_{n=1}^{m}dv_n\prod_{n=1}^{m}dt_n.
  \end{split}
\end{equation*}
By Fubini's theorem, the right side of the above expression is equal to
\begin{equation*}
  \begin{split}
      & \quad\qquad m!c\sum_{l_1,\cdots,l_d\in \{1,\cdots,d\}}\sum_{i_{n,l}\in \{1,2\}\atop n=1,\cdots,m; l=1,\cdots,d} \sum_{(\epsilon_1,\cdots,\epsilon_m)\in \{0,1,2\}^m}\int_{\Lambda_{j,k}}\prod_{n=1}^{m}\int_{\prod_{l=1}^{d} I^n_{i_{n,l}}} \\
       &\qquad\qquad\qquad\qquad\times \frac{ |v_{n,l_n}|^{\epsilon_n\theta}}{\prod_{l=1}^{d}|v_{n,l}|^{k_{n,l}(i_{n,l})}(t_n-t_{n-1})^{\alpha k_{n,l}(i_{n,l})}} dv_n\prod_{n=1}^{m}dt_n.
  \end{split}
\end{equation*}
%Again by Fubini's theorem, this last term is equal to
\begin{equation}\label{J m theta prod}
  \begin{split}
      &= m!c\sum_{l_1,\cdots,l_d\in \{1,\cdots,d\}}\sum_{i_{n,l}\in \{1,2\}\atop n=1,\cdots,m; l=1,\cdots,d} \sum_{(\epsilon_1,\cdots,\epsilon_m)\in \{0,1,2\}^m}\\
      &\qquad\times\int_{\Lambda_{j,k}}\prod_{n=1}^{m}\prod_{l=1\atop l\neq l_n}^{d}\int_{ I^n_{i_{n,l}}} \frac{ 1}{|v_{n,l}|^{k_{n,l}(i_{n,l})}(t_n-t_{n-1})^{\alpha k_{n,l}(i_{n,l})}} dv_{n,l}\\
      &\qquad\qquad\qquad\times\int_{ I^n_{i_{n,l_n}}} \frac{1}{|v_{n,l_n}|^{k_{n,l_n}(i_{n,l_n})-\epsilon_n\theta}(t_n-t_{n-1})^{\alpha k_{n,l_n}(i_{n,l_n})}} dv_{n,l_n}\prod_{n=1}^{m}dt_n.
  \end{split}
\end{equation}
\begin{itemize}
  \item If $i_{n,l}=1$ or $2$ with $l\neq l_n$, then we have
  $$\int_{ I^n_{i_{n,l}}} \frac{ 1}{|v_{n,l}|^{k_{n,l}(i_{n,l})}(t_n-t_{n-1})^{\alpha k_{n,l}(i_{n,l})}} dv_{n,l}=\frac{K_1}{(t_n-t_{n-1})^{\alpha}},$$
  where the constant $K_1$ depends only on $i_{n,l}$.
  \item If $i_{n,l_n}=1$ or $2$, then we get
  $$\int_{ I^n_{i_{n,l_n}}} \frac{1}{|v_{n,l_n}|^{k_{n,l_n}(i_{n,l_n})-\epsilon_n\theta}(t_n-t_{n-1})^{\alpha k_{n,l_n}(i_{n,l_n})}} dv_{n,l_n}=\frac{K_2}{(t_n-t_{n-1})^{\alpha(1+\epsilon_n\theta)}},$$
  where the constant $K_2$ depends on $i_{n,l_n}$, $\theta$, and $\epsilon_n$ such that $\sup_{\theta,\epsilon_n}K_2<\infty$.
\end{itemize}
Combining the above discussion with \eqref{J m theta prod}, we obtain
\begin{align}\label{J prod Ehm}
     \mathcal{J}(m,\theta) &\leq  m!c_1\sum_{l_1,\cdots,l_d\in \{1,\cdots,d\}}\sum_{i_{n,l}\in \{1,2\}\atop n=1,\cdots,m; l=1,\cdots,d} \sum_{(\epsilon_1,\cdots,\epsilon_m)\in \{0,1,2\}^m}\nonumber\\
       &\qquad\qquad\qquad\qquad\times\int_{\Lambda_{j,k}}\prod_{n=1}^{m}\frac{1}{(t_n-t_{n-1})^{\alpha(d+\epsilon_n\theta)}}\prod_{n=1}^{m}dt_n\nonumber\\
       &= m!c_2 \sum_{(\epsilon_1,\cdots,\epsilon_m)\in \{0,1,2\}^m}\int_{\Lambda_{j,k}}\prod_{n=1}^{m}\frac{1}{(t_n-t_{n-1})^{\alpha(d+\epsilon_n\theta)}}\prod_{n=1}^{m}dt_n\nonumber\\
       &= m!c_2 \sum_{(\epsilon_1,\cdots,\epsilon_m)\in \{0,1,2\}^m}\int_{a_{j,k}}^{b_{j,k}}dt_1\int_{t_1}^{b_{j,k}}dt_2\cdots\int_{t_{m-1}}^{b_{j,k}}dt_m\nonumber\\
       &\qquad\qquad\qquad\qquad\qquad\qquad\qquad\times\prod_{n=1}^{m}\frac{1}{(t_n-t_{n-1})^{\alpha(d+\epsilon_n\theta)}},
\end{align}
where $a_{j,k}=2^{-j}(k-1)$ and $b_{j,k}=2^{-j}(k+1)$. Let $0<\theta<\left\{(\frac{1}{\alpha}-d)/2\right\}\wedge 1$. We integrate in the order of $dt_{m}, dt_{m-1},\ldots, dt_1$, and use changes of variables in each step to construct Beta functions. Hence,  \eqref{J prod Ehm} is equal to
\begin{equation}\label{int ajk bjk}
 \sum_{(\epsilon_1,\cdots,\epsilon_m)\in \{0,1,2\}^m} K_{\epsilon}\int_{a_{j,k}}^{b_{j,k}} (b_{j,k}-t_1)^{(m-1)(1-d\alpha)-\alpha\theta\sum_{i=0}^{m-2}\epsilon_{m-i}}  t_1^{-\alpha(d+\epsilon_1\theta)} dt_1,
\end{equation}
where $K_{\epsilon}=m!c_2\frac{1}{1-\alpha(d+\epsilon_m\theta)}\frac{\Gamma(2-\alpha(d+\epsilon_{m}\theta))\prod_{i=1}^{m-2}\Gamma(1-\alpha(d+\epsilon_{m-i}\theta))}{\Gamma(1+(m-1)(1-d\alpha)-\theta\sum_{i=0}^{m-2}\epsilon_{m-i})}$
and $\epsilon=(\epsilon_1,\cdots,\epsilon_m)$. Recall that $\sum_{n=1}^{m}\epsilon_n=m$. Then

\begin{itemize}
  \item If $k=1$. According to \eqref{t t plus h xi xi plus y J} and \eqref{int ajk bjk} we get
  \begin{equation}\label{k equal 1}
        \E\left[|A_{j,1,x,y}|^{m}\right]
        \leq K_1 \|x-y\|^{m\theta}2^{-jm(1-d\alpha -\alpha\theta)},
\end{equation}
  where $K_1$ is equal to $c\sum_{\epsilon\in \{0,1,2\}^m} K_{\epsilon}\frac{\Gamma(1+(m-1)(1-d\alpha)-\alpha\theta\sum_{i=0}^{m-2}\epsilon_{m-i})\Gamma(1-\alpha(d+\epsilon_1\theta))}{\Gamma(1+m(1-d\alpha-\alpha\theta))}$
  with $c=2^{-m(d+\theta-1)}\pi^{-md} 2^{m(1-d\alpha-\alpha\theta)}$.
  \item If $2\leq k\leq 2^j-1$. Therefore \eqref{int ajk bjk} is less than or equal to
  \begin{align}\label{k biger than 2}
 &\sum_{(\epsilon_1,\cdots,\epsilon_m)\in \{0,1,2\}^m} K_{\epsilon} (b_{j,k}-a_{j,k})^{(m-1)(1-d\alpha)-\alpha\theta\sum_{i=0}^{m-2}\epsilon_{m-i}} \int_{a_{j,k}}^{b_{j,k}} t_1^{-\alpha(d+\epsilon_1\theta)} dt_1  \nonumber\\
 %\end{align}
 %And this last term is equal to
 %\begin{align}
   & =\sum_{(\epsilon_1,\cdots,\epsilon_m)\in \{0,1,2\}^m} K_{\epsilon}2^{(m-1)(1-d\alpha)-\alpha\theta\sum_{i=0}^{m-2}\epsilon_{m-i}} 2^{-jm(1-d\alpha-\alpha\theta)} \nonumber\\
   &\qquad\qquad\qquad\qquad\qquad\qquad\qquad\qquad\qquad\times\int_{k-1}^{k+1} u^{-\alpha(d+\epsilon_1\theta)} du \nonumber\\
   & \leq \tilde{K}_2 2^{-jm(1-d\alpha-\alpha\theta)}\int_{k-1}^{k+1} u^{-\alpha d} du\leq \tilde{K}_2 2^{-jm(1-d\alpha-\alpha\theta)} (k-1)^{-\alpha d}.
\end{align}
Hence by \eqref{t t plus h xi xi plus y J} and \eqref{k biger than 2}, we have
\begin{equation}\label{Ajkxy k biger than 2}
        \E\left[|A_{j,k,x,y}|^{m}\right]
        \leq K_2 \|x-y\|^{m\theta}2^{-jm(1-d\alpha -\alpha\theta)}(k-1)^{-\alpha d}.
\end{equation}
\end{itemize}
Now we return to estimate $\E\left[\left\{\sum_{k=1}^{2^j-1} |A_{j,k,x,y}|^q\right\}^{2^a}\right]$. Recall that $m=q2^a$. According to \eqref{Holder l} and the convexity of the function $x\mapsto x^{2^a}$, we have
\begin{align*}
    &\E\left[\left\{\sum_{k=1}^{2^j-1} |A_{j,k,x,y}|^q\right\}^{2^a}\right]\\
    &\leq \left\{\sum_{k=1}^{2^j-1}\E\left[|A_{j,k,x,y}|^{q2^a}\right]^{1/2^a}\right\}^{2^a}\\
    &\leq 2^{a-1}\left(\E\left[|A_{j,1,x,y}|^{q2^a}\right]+\left\{\sum_{k=2}^{2^j-1}\E\left[|A_{j,k,x,y}|^{q2^a}\right]^{1/2^a}\right\}^{2^a}\right).
    \end{align*}
    Hence, using \eqref{k equal 1} and \eqref{Ajkxy k biger than 2}, we derive that this last term is less than or equal to
    \begin{align*}
    & \tilde{K}\|x-y\|^{q2^a\theta}2^{-jq2^a(1-d\alpha -\alpha\theta)}\left(1+\left\{\sum_{k=2}^{2^j-1}(k-1)^{-\alpha d/2^a}\right\}^{2^a}\right)\nonumber\\
    &\leq\tilde{K}\|x-y\|^{q2^a\theta}2^{-jq2^a(1-d\alpha -\alpha\theta)}\left(1+\left\{\sum_{k=2}^{2^j-1}2\int_{k-\frac{3}{2}}^{k-1}x^{-d\alpha/2^a}dx\right\}^{2^a}\right)\\
    %Therefore, it is easy to see that the right-hand side of \eqref{RHS 12333} is less than or equal to
    &\leq\tilde{K}\|x-y\|^{q2^a\theta}2^{-jq2^a(1-d\alpha -\alpha\theta)}\left(1+\left\{2\int_{\frac{1}{2}}^{2^j-2}x^{-d\alpha/2^a}dx\right\}^{2^a}\right)\\
    &\leq \hat{K} \|x-y\|^{q2^a\theta}2^{-jq2^a(1-d\alpha -\alpha\theta)}\left(1+2^{2^aj-jd\alpha}\right).
  \end{align*}
Therefore
\begin{equation}\label{Ajxy final}
  \E\left[\left\{\sum_{k=1}^{2^j-1} |A_{j,k,x,y}|^q\right\}^{2^a}\right]\leq K\|x-y\|^{q2^a\theta}2^{-jq2^a(1-d\alpha -\alpha\theta)}2^{j(2^a-\alpha d)}.
\end{equation}
The remainder of the proof is by Chebyshev's inequality, i.e.
\begin{equation*}
  \begin{split}
      & \mathbb{P}\left[\sum_{k=1}^{2^j-1} |A_{j,k,x,y}|^q\geq 2^{-jq(1-d\alpha-\theta\alpha)+j}\|x-y\|^{q\theta}2^{\gamma h} \right] \\
       & \leq
2^{j2^aq(1-d\alpha-\theta\alpha)-2^aj}\|x-y\|^{-q2^a\theta}2^{-2^a\gamma h}\E\left[\left\{\sum_{k=1}^{2^j-1} |A_{j,k,x,y}|^q\right\}^{2^a}\right].
  \end{split}
\end{equation*}
Hence \eqref{Ajxy final} concludes the proof of Lemma \ref{lem chebechev local time}.
\end{proof}
\begin{lem}\label{lem estimation X}
  Let $(X_t)_{t\in [0,1]}$  be an $\R^d$-valued continuous stochastic process that verifies \normalfont{\textbf{H}}.
  Then, for all $0<\delta<1$, almost surely there exists $j_1=j_1(\omega,\delta)$ such that
  \begin{equation}\label{sup Djk}
    \sup_{1\leq k\leq 2^j-1}\sup_{t\in D_{j,k}}\|X(t)-X(d_{j,k})\|\leq 2^{-j(\alpha-\frac{2-\delta}{p_0})} \qquad \text{for } j\geq j_1,
  \end{equation}
  where $d_{j,k}$ and $D_{j,k}$ are as in Lemma \ref{lem chebechev local time}.
\end{lem}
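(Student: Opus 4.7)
The plan is to deduce \eqref{sup Djk} from a single global Hölder-type bound on $X$ obtained by one application of the Garsia-Rodemich-Rumsey inequality (Lemma \ref{GRR}). Hypothesis \textbf{H} furnishes exactly the moment input that GRR requires, so no additional preparation is needed; neither the $\alpha$-LND structure nor a per-cell Borel-Cantelli argument enters.

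Fix $0<\delta<1$ and pick an exponent $\beta$ in the open interval
\begin{equation*}
\max\!\Bigl(\tfrac{2}{p_0},\ \alpha+\tfrac{\delta}{p_0}\Bigr)\ <\ \beta\ <\ \alpha+\tfrac{1}{p_0}.
\end{equation*}
This interval is nonempty because $p_0\alpha>1$ (from \textbf{H}) gives $\tfrac{2}{p_0}<\alpha+\tfrac{1}{p_0}$, and $\delta<1$ gives $\alpha+\tfrac{\delta}{p_0}<\alpha+\tfrac{1}{p_0}$. Apply Lemma \ref{GRR} on $[0,1]$ to each coordinate $X^i$ with $\Psi(u)=|u|^{p_0}$ and $p(u)=|u|^{\beta}$; by Fubini and \textbf{H},
\begin{equation*}
\E\!\int_0^1\!\!\int_0^1\!\frac{|X^i(v)-X^i(w)|^{p_0}}{|v-w|^{\beta p_0}}\,dv\,dw\ \leq\ K\!\int_0^1\!\!\int_0^1\!|v-w|^{(\alpha-\beta)p_0}\,dv\,dw\ <\ \infty,
\end{equation*}
since $\beta<\alpha+\tfrac{1}{p_0}$. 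Hence the double integrals $B_i$ are a.s.\ finite, and GRR (the integral $\int_0^{|z-y|}u^{\beta-1-2/p_0}\,du$ converges thanks to $\beta>\tfrac{2}{p_0}$) produces, after summing the $d$ coordinate bounds,
\begin{equation*}
\|X(z)-X(y)\|\ \leq\ C(\omega)\,|z-y|^{\beta-2/p_0},\qquad y,z\in[0,1],
\end{equation*}
with $C(\omega)<\infty$ almost surely.

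To conclude, specialize to $y=d_{j,k}$ and $z=t\in D_{j,k}$, so that $|z-y|\leq 2^{1-j}$, obtaining
\begin{equation*}
\sup_{1\leq k\leq 2^j-1}\sup_{t\in D_{j,k}}\|X(t)-X(d_{j,k})\|\ \leq\ C(\omega)\,2^{\beta-2/p_0}\,2^{-j(\beta-2/p_0)}.
\end{equation*}
By the choice $\beta>\alpha+\tfrac{\delta}{p_0}$, the exponent $\beta-\tfrac{2}{p_0}$ strictly exceeds $\alpha-\tfrac{2-\delta}{p_0}$ by the positive gap $\beta-\alpha-\tfrac{\delta}{p_0}$. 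Taking $j_1(\omega,\delta)$ to be the smallest integer for which $C(\omega)\,2^{\beta-2/p_0}\leq 2^{j_1(\beta-\alpha-\delta/p_0)}$ then yields \eqref{sup Djk} for every $j\geq j_1$.

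I do not anticipate a genuine obstacle. The only delicate point is the triple constraint on $\beta$, and it is exactly compatible with the hypotheses $p_0\alpha>1$ and $\delta\in(0,1)$; the rest is routine exponent bookkeeping.
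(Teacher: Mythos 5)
Your proof is correct, and it rests on exactly the same key tool as the paper: the Garsia--Rodemich--Rumsey inequality with $\Psi(u)=|u|^{p_0}$ and a power weight; your exponent $\beta$ is just the paper's $\alpha+\gamma/p_0$ in disguise, and your three constraints on $\beta$ translate precisely into the paper's conditions $2-\alpha p_0<\gamma<1$ and $\delta<\gamma$. Where you diverge is in how the almost sure statement is extracted. The paper first turns the pathwise GRR bound into a moment estimate by taking expectations, then applies Chebyshev to get a probability bound of order $2^{-j(\gamma-\delta)}$ and finishes with Borel--Cantelli; you instead keep the random constant $C(\omega)$ from GRR and observe that the pathwise H\"older exponent $\beta-2/p_0$ exceeds the target exponent $\alpha-\frac{2-\delta}{p_0}$ by the positive gap $\beta-\alpha-\delta/p_0$, so the inequality holds deterministically once $j$ is large enough for $C(\omega)2^{\beta-2/p_0}\leq 2^{j(\beta-\alpha-\delta/p_0)}$. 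This is a genuine (if modest) streamlining: it shows the Chebyshev/Borel--Cantelli detour is unnecessary for this lemma, since the statement only asks for a random threshold $j_1(\omega,\delta)$. What the paper's route buys in exchange is the explicit polynomial decay of the exceptional probabilities (its inequality \eqref{Chebyshev X}), which is structurally parallel to the Borel--Cantelli arguments used in Lemma \ref{lem sup L Besov}, though it is not needed for the conclusion as stated. Minor points in your favour: applying GRR coordinatewise is cleaner given that Lemma \ref{GRR} is stated for scalar functions, and your bookkeeping (nonemptiness of the interval for $\beta$ via $p_0\alpha>1$ and $\delta<1$, the factor $2^{1-j}$ for $|t-d_{j,k}|$, monotonicity in $j$ of the defining inequality for $j_1$) is all in order.
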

\begin{proof}
  According to  \eqref{moment leq}, we have almost surely
  \begin{equation}\label{B finit}
    B:=\int_{0}^{1}\int_{0}^{1}\frac{\|X(t)-X(s)\|^{p_0}}{|t-s|^{\alpha p_0+\gamma}}dt\,ds<\infty.
  \end{equation}
  Then by Lemma \ref{GRR} with $\Psi(u)=|u|^{p_0}$ and $p(u)=|u|^{\alpha+\frac{\gamma}{{p_0}}}$ where $\frac{1}{{p_0}}<\alpha$ and $2-\alpha {p_0}<\gamma<1$, we derive
  \begin{equation}\label{sup net}
    \sup_{1\leq k\leq 2^j-1}\sup_{t\in D_{j,k}}\|X(t)-X(d_{j,k})\|^{p_0}\leq C_{\alpha,{p_0},\gamma}B2^{-j(\alpha {p_0}+\gamma-2)}.
  \end{equation}
    Therefore, by \eqref{moment leq}, \eqref{B finit}, and \eqref{sup net} we get
  \begin{equation}\label{moment sup X}
    \E\left[\sup_{1\leq k\leq 2^j-1}\sup_{t\in D_{j,k}}\|X(t)-X(d_{j,k})\|^{p_0}\right]\leq K_{\alpha,{p_0},\gamma}2^{-j(\alpha {p_0}+\gamma-2)}.
  \end{equation}
  Hence, let $0<\delta<1$ and $2-\alpha {p_0}<\gamma<1$ such that $\delta<\gamma$, then by Chebyshev's inequality and \eqref{moment sup X} we write
  \begin{equation}\label{Chebyshev X}
    \mathbb{P}\left[\sup_{1\leq k\leq 2^j-1}\sup_{t\in D_{j,k}}\|X(t)-X(d_{j,k})\|\geq 2^{-j(\alpha-\frac{2-\delta}{{p_0}})}\right]\leq K_{\alpha,{p_0},\gamma}2^{-j(\gamma-\delta)}.
  \end{equation}
  Therefore the Borel–Cantelli lemma concludes the proof of Lemma \ref{lem estimation X}.
\end{proof}
\begin{lem}\label{lem sup L Besov}
Let $(X_t)_{t\in [0,1]}$  be an $\R^d$-valued continuous stochastic process which is $\alpha$-LND with $\alpha\in (0,\frac{1}{d})$. Assume also that $X$ verifies \normalfont{\textbf{H}}.
Then for all positive integer $q$, almost surely we have
\begin{equation}\label{sup local time Besov}
  \sup_{j\geq 1}2^{jq(1-d\alpha)-j} \int_{0}^{1} \sum_{k=1}^{2^j-1}\sup_{x\in \R^d}|L(x,2^{-j}(s+k))-L(x,2^{-j}(s+k-1))|^q ds<\infty.
\end{equation}
\end{lem}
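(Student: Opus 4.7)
The strategy is to reduce the $L^q$-norm on the left of \eqref{sup local time Besov} to a sum over dyadic blocks $D_{j,k}$, localize the supremum in $x$ using Lemma \ref{lem estimation X}, and then run a dyadic chaining scheme driven by the two estimates of Lemma \ref{lem chebechev local time}.

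\textbf{Reduction and localization.} For each $s\in[0,1]$ and every $k$, the interval $[2^{-j}(s+k-1),2^{-j}(s+k)]$ is contained in $D_{j,k}$, so by additivity of the occupation measure
$$\sup_x|L(x,2^{-j}(s+k))-L(x,2^{-j}(s+k-1))|\le \sup_x L(x,D_{j,k}),$$
and the inner quantity is constant in $s$. It is therefore enough to prove that almost surely
$$\sup_{j\ge 1} 2^{jq(1-d\alpha)-j}\sum_{k=1}^{2^j-1}\sup_{x\in\R^d}L(x,D_{j,k})^q<\infty.$$
Since $L(\cdot,D_{j,k})$ vanishes outside $\overline{X(D_{j,k})}$, Lemma \ref{lem estimation X} (applied with some small $\delta\in(0,1)$) provides a uniform radius $R_j:=2^{-j(\alpha-(2-\delta)/p_0)}$ such that for $j\ge j_1(\omega)$ and every $k$,
$$\sup_x L(x,D_{j,k})=\sup_{\|u\|\le R_j}L(u+X(d_{j,k}),D_{j,k}).$$

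\textbf{Dyadic chaining.} For each $h\ge 0$, fix a net $\mathcal{G}_h^j \subset\{\|u\|\le R_j\}$ of mesh $R_j 2^{-h}$ and cardinality at most $C_d 2^{hd}$, and for every $u$ let $u^h$ be its closest point in $\mathcal{G}_h^j$. Telescoping yields
$$\sup_{\|u\|\le R_j}L(u+X(d_{j,k}),D_{j,k}) \le L(X(d_{j,k}),D_{j,k}) + \sum_{h\ge 1} M_h^{j,k},$$
with $M_h^{j,k}:=\max|A_{j,k,u,u'}|$ over adjacent pairs $(u,u')\in\mathcal{G}_h^j\times\mathcal{G}_{h-1}^j$. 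A weighted Jensen inequality with weights proportional to $2^{-\eta h}$ (for some small $\eta>0$) gives
$$\sup_{\|u\|\le R_j}L(u+X(d_{j,k}),D_{j,k})^q\le C_{q,\eta}\Bigl(L(X(d_{j,k}),D_{j,k})^q + \sum_{h\ge 1} 2^{\eta h(q-1)}(M_h^{j,k})^q\Bigr).$$
To dominate the max inside $M_h^{j,k}$, I would then use $(M_h^{j,k})^q\le \bigl(\sum_{\text{pairs}}|A_{j,k,u,u'}|^{qr}\bigr)^{1/r}$ for a large enough integer $r$, followed by Hölder's inequality in $k$, reducing the task to bounding $\sum_k|A_{j,k,u,u'}|^{qr}$ pair by pair.

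\textbf{Probability bounds and Borel--Cantelli.} The diagonal sum $\sum_k L(X(d_{j,k}),D_{j,k})^q$ is dealt with directly by the first estimate of Lemma \ref{lem chebechev local time} applied at $x=0$, whose exceptional probability $K 2^{-jd\alpha}$ is summable in $j$. For each pair at level $h$, the second estimate of the same lemma used with $m=qr\cdot 2^a$ together with $\|u-u'\|\le CR_j 2^{-h}$, combined with a union bound over the at most $C_d 2^{hd}$ pairs, produces a deterministic contribution of the shape
$$C\cdot 2^{-jq(1-d\alpha)+j+jq\theta(2-\delta)/p_0}\cdot 2^{h[\eta(q-1)+(d+\gamma-qr\theta)/r]},$$
with exceptional probability at most $C_d 2^{hd}\cdot K\cdot 2^{-jd\alpha-2^a\gamma h}$. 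Borel--Cantelli applied in both indices $j$ and $h$ then yields the desired almost-sure uniform bound in $j$.

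\textbf{Main obstacle.} The delicate step is calibrating the six parameters $\delta,\eta,\gamma,\theta,r,a$ simultaneously so that: (i) the geometric series in $h$ converges, i.e.\ $qr\theta>r\eta(q-1)+d+\gamma$; (ii) the exceptional probabilities are summable in $(j,h)$, demanding $2^a\gamma>d$; and (iii) the spurious exponent $2^{jq\theta(2-\delta)/p_0}$ is absorbed by $2^{-jd\alpha}$, which forces $q\theta(2-\delta)/p_0<d\alpha$, feasible thanks to $p_0>1/\alpha$. Taking $\theta$ small first (allowed since $\theta\in(0,((1/\alpha-d)/2)\wedge 1)$), then $r$ large, then $a$ large, with $\eta,\gamma$ and $1-\delta$ sufficiently small, solves all constraints simultaneously for every positive integer $q$, completing the plan.
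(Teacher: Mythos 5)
Your overall skeleton (reduce to $\sum_k \sup_x L(x,D_{j,k})^q$, localize the supremum to a ball of radius $R_j=2^{-j(\alpha-(2-\delta)/p_0)}$ via Lemma \ref{lem estimation X}, then chain using Lemma \ref{lem chebechev local time} and Borel--Cantelli in $(j,h)$) is the same as the paper's, but the chaining itself has a genuine gap. You anchor the chain only at the single point $u=0$ (i.e.\ at $X(d_{j,k})$, via the first estimate applied at $x=0$) and run nets of mesh $R_j2^{-h}$ all the way down from radius $R_j$. Since the pair estimate \eqref{x plus X djk y plus X djk} carries the factor $\|y_1-y_2\|^{q\theta}$, the coarse scales between $2^{-j\alpha}$ and $R_j$ contribute a deterministic threshold of order $2^{-jq(1-d\alpha-\theta\alpha)+j}R_j^{q\theta}=2^{-jq(1-d\alpha)+j}\,2^{jq\theta(2-\delta)/p_0}$, which is exactly the ``spurious exponent'' you record. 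Your claim that this factor is ``absorbed by $2^{-jd\alpha}$'' confuses the two sides of the Chebyshev/Borel--Cantelli argument: the factor $2^{-jd\alpha}$ lives in the probability of the exceptional event, not in the deterministic bound, so no calibration of $\delta,\eta,\gamma,\theta,r,a$ removes the loss. After multiplying by $2^{jq(1-d\alpha)-j}$ you are left with $2^{jq\theta(2-\delta)/p_0}\to\infty$, so your scheme only proves the lemma with $1-d\alpha$ replaced by the strictly smaller exponent $1-d\alpha-\theta(2-\delta)/p_0$, not the statement itself.

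The missing idea is the two-scale construction of the paper: introduce an intermediate grid $G_j\subset\{u\,;\,\|u\|\le R_j\}$ of spacing $\beta_j=2^{-j\alpha}$ (cardinality of order $2^{jd(2-\delta)/p_0}$), control $\sum_k|L(x+X(d_{j,k}),D_{j,k})|^q$ at \emph{every} grid point by the first estimate \eqref{x plus X djk}, which is uniform in $x$ and hence incurs no H\"older loss --- the union bound over $G_j$ costs only a factor $\#G_j$ in probability, which remains summable precisely because $\delta>2-\alpha p_0$ --- and use the pair estimate \eqref{x plus X djk y plus X djk} only for the dyadic refinement below scale $\beta_j$, where $\|y_1-y_2\|^{q\theta}\le(2^{-j\alpha}2^{-h})^{q\theta}$ exactly cancels the $2^{jq\theta\alpha}$ in the threshold and leaves the convergent series $\sum_{h\ge1}2^{-h(q\theta-\gamma)}$. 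With this anchoring, the rest of your bookkeeping (maxima over linked pairs, Borel--Cantelli in $j$ and $h$) does go through, and this is essentially how the paper completes the proof; as written, however, your proposal does not establish the stated exponent $1-d\alpha$.
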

\begin{proof}
  Let $d_{j,k}=2^{-j}(k-1)$; $D_{j,k}=[2^{-j}(k-1),2^{-j}(k+1)]$ for $j\geq 1$ and $1\leq k\leq 2^j-1$. It follows from Lemma \ref{lem estimation X} that for all $2-\alpha {p_0}<\delta<1$, almost surely there exists $j_1=j_1(\omega,\delta)$ such that
  \begin{equation}\label{sup Djk 2}
    \sup_{1\leq k\leq 2^j-1}\sup_{t\in D_{j,k}}\|X(t)-X(d_{j,k})\|\leq 2^{-j(\alpha-\frac{2-\delta}{{p_0}})} \qquad \text{for } j\geq j_1,
  \end{equation}
  Let $\beta_j=2^{-j\alpha}$ and
  $$G_j=\left\{x\in \R^d\,; \|x\|\leq 2^{-j(\alpha-\frac{2-\delta}{{p_0}})}, \quad x=\beta_jb \quad\text{for some}\; b\in \mathbb{Z}^d\right\},$$
  where $\mathbb{Z}$ is the set of integers. The cardinality of $G_j$ verifies
  \begin{equation}\label{card Gj}
    \# G_j\leq \left(2\left[2^{j(\frac{2-\delta}{{p_0}})}\right]+1\right)^d\leq 3^d 2^{jd(\frac{2-\delta}{{p_0}})},
  \end{equation}
  where $\left[2^{j(\frac{2-\delta}{{p_0}})}\right]$ is the integral part of $2^{j(\frac{2-\delta}{{p_0}})}$. It follows from \eqref{card Gj} and  Lemma \ref{lem chebechev local time} that
  \begin{align}\label{}
     &\mathbb{P}\left[\sum_{k=1}^{2^j-1} |L(x+X(d_{j,k}),D_{j,k})|^q\geq 2^{-jq(1-d\alpha)+j} \quad\text{for some}\; x\in G_j\right]\nonumber\\
     &\leq \# G_j K 2^{-jd\alpha}\leq 3^d K 2^{-jd(\alpha -\frac{2-\delta}{{p_0}})}.
  \end{align}
  Since $2-\alpha {p_0}<\delta<1$, it follows by the Borel-Cantelli lemma that  almost
surely there exists $j_2=j_2(\omega,\delta)$ such that
  \begin{equation}\label{Lx}
    \sum_{k=1}^{2^j-1} \sup_{x\in G_j}|L(x+X(d_{j,k}),D_{j,k})|^q\leq 2^{-jq(1-d\alpha)+j}\quad\text{for}\; j\geq j_2.
  \end{equation}
For any fixed integers $j,h\geq 1$ and any $x\in G_j$, define
  \begin{equation}\label{F set}
    F(j,h,x)=\left\{y\in \R^d\,;\;y=x+\beta_j\sum_{n=1}^{h}\epsilon_n2^{-n}\quad\text{for}\;\epsilon_n\in\{0,1\}^d\right\}.
  \end{equation}
  A pair of points $y_1, y_2 \in F(j,h,x)$ is said to be linked if $y_2-y_1=\beta_j\epsilon 2^{-h}$ for
some $\epsilon\in\{0,1\}^d$. Then by \eqref{x plus X djk y plus X djk} with $\gamma$ and $a$ such that $\gamma<q\theta$ and $2^a>\frac{d}{\gamma}$, we have
  \begin{align}\label{x plus X djk y plus X djk2}
      & \mathbb{P}\left[\sum_{k=1}^{2^j-1} |A_{j,k,y_1,y_2}|^q\geq 2^{-jq(1-d\alpha-\theta\alpha)+j}\|y_1-y_2\|^{q\theta}2^{\gamma h} \right. \nonumber\\
       & \left. \qquad\text{for some } x\in G_j, h\geq 1 \text{ and some linked pair } y_1, y_2 \in F(j,h,x)\vphantom{\int}\right]\nonumber\\
     &\leq \#G_j \sum_{h=1}^{\infty}2^{hd} K 2^{-jd\alpha} 2^{-2^a\gamma h}\leq K 3^d 2^{-jd(\alpha-\frac{2-\delta}{{p_0}})}\sum_{h=1}^{\infty}2^{-h(2^a\gamma-d)}.\nonumber
  \end{align}
  As $2-\alpha {p_0}<\delta<1$ and $2^a>\frac{d}{\gamma}$, it follows by the Borel-Cantelli lemma that  almost surely there exists $j_3=j_3(\omega,\delta,\gamma)$ such that for $j\geq j_3$
  \begin{equation}\label{Ly1 moins Ly2}
  \begin{split}
     & \sum_{k=1}^{2^j-1} |L(y_1+X(d_{j,k}),D_{j,k})-L(y_2+X(d_{j,k}),D_{j,k})|^q  \\
       & \leq 2^{-jq(1-d\alpha-\theta\alpha)+j}\|y_1-y_2\|^{q\theta}2^{\gamma h},
  \end{split}
  \end{equation}
  for all $x\in G_j$, $h\geq 1$ and any linked pair $y_1, y_2 \in F(j,h,x)$. Let $\Omega_0$ be the
event that \eqref{sup Djk 2}, \eqref{Lx} and \eqref{Ly1 moins Ly2} hold, hence $\mathbb{P}[\Omega_0]=1$. Let $j\geq j_4:=\max\{j_1,j_2,j_3\}$ be fixed.
 For any $y\in \R^d$ with $\|y\|\leq 2^{-j(\alpha-\frac{2-\delta}{{p_0}})}$, we
represent $y$ in the form $y=\lim_{h\to\infty}y_h$, where
  $$y_h=x+\beta_j\sum_{n=1}^{h}\epsilon_n2^{-n}\quad(y_0=x,\;\epsilon_n\in\{0,1\}^d),$$
  for some $x\in G_j$.  Then each pair $y_{h-1},y_h$ is linked, so by \eqref{Ly1 moins Ly2}  and the continuity of $L(\cdot,D_{j,k})$ we have
  \begin{align*}
     &\sum_{k=1}^{2^j-1} |L(y+X(d_{j,k}),D_{j,k})-L(x+X(d_{j,k}),D_{j,k})|^q  \\
     &\leq   2^{-jq(1-d\alpha-\theta\alpha)+j}\sum_{h=1}^{\infty}|\beta_j2^{-h}|^{q\theta}2^{\gamma h}\\
     & =2^{-jq(1-d\alpha)+j}\sum_{h=1}^{\infty}2^{-h(q\theta-\gamma)}
  \end{align*}
  Since $\gamma<q\theta$, we have almost surely for $j\geq j_4$,
  \begin{equation}\label{Ly mois Lx}
    \sum_{k=1}^{2^j-1} |L(y+X(d_{j,k}),D_{j,k})-L(x+X(d_{j,k}),D_{j,k})|^q\leq C 2^{-jq(1-d\alpha)+j}.
  \end{equation}
  for all $y\in \R^d$ with $\|y\|\leq 2^{-j(\alpha-\frac{2-\delta}{{p_0}})}$. It follows from \eqref{Lx} and \eqref{Ly mois Lx} that almost surely for $j\geq j_4$,
  \begin{equation}\label{Ly}
    \sum_{k=1}^{2^j-1} |L(y+X(d_{j,k}),D_{j,k})|^q\leq C_1 2^{-jq(1-d\alpha)+j},
  \end{equation}
 for all $y\in \R^d$ with $\|y\|\leq 2^{-j(\alpha-\frac{2-\delta}{{p_0}})}$. On the other hand, we have almost surely for $j\geq j_4$,
  \begin{align*}
     &\int_{0}^{1} \sum_{k=1}^{2^j-1}\sup_{x\in \R^d}|L(x,2^{-j}(s+k))-L(x,2^{-j}(s+k-1))|^q ds  \\
     &\leq \sum_{k=1}^{2^j-1}\sup_{x\in \R^d}|L(x,D_{j,k})|^q.
     \end{align*}
     This last term is equal to
     \begin{align*}
     \sum_{k=1}^{2^j-1}\sup_{x\in X(D_{j,k})}|L(x,D_{j,k})|^q &\leq  \sum_{k=1}^{2^j-1}\sup_{y\in V}|L(y+X(d_{j,k}),D_{j,k})|^q\\
     &\leq C_1 2^{-jq(1-d\alpha)+j},
  \end{align*}
  where $V=\{y\in \R^d\,;\;\|y\|\leq 2^{-j(\alpha-\frac{2-\delta}{{p_0}})}\}$. This completes the proof of Lemma \ref{lem sup L Besov}.
\end{proof}
\begin{proof}[Proof of Theorem \ref{thm Besov regularity of L}]
According to Lemma \ref{lem sup L Besov} and \ref{lem Veraar form norm Besov}, and Theorem \ref{thm equivalance norm}(iii) we conclude the proof.
  \end{proof}
  Now we provide the proof of Theorem \ref{Adler Besov},  which clearly explains that if a
  functions local time, $L(x,t)$, is Besov regular, in $t$ uniformly in $x$, then this has a significant effect on the Besov irregularity of the function itself.
\begin{proof}[Proof of Theorem \ref{Adler Besov}]
  According to the occupation formula \eqref{occupation formula}, we have for all $t\in (0,1]$, $0<h\leq t$, and $s\in [0,1-h]$,
  \begin{align*}
    h &= \int_{f([s,s+h])}L(x,[s,s+h])dx\nonumber\\
           &\leq \lambda_d\left(f([s,s+h])\right)\sup_{x\in \R^d}L(x,[s,s+h])\nonumber\\
           &\leq \sup_{r,\tau\in[s,s+h]}\|f(r)-f(\tau)\|^d\sup_{x\in \R^d}L(x,[s,s+h])\nonumber\\
           &=\sup_{r,\tau\in[0,1]}\|f(hr+s)-f(h\tau+s)\|^d\sup_{x\in \R^d}L(x,[s,s+h]).
  \end{align*}
  Hence
  $$h^{1/d}\leq \sup_{r,\tau\in[0,1]}\|f(hr+s)-f(h\tau+s)\|\sup_{x\in \R^d}|L(x,[s,s+h])|^{1/d}.$$
  Therefore by H\"{o}lder's inequality we derive that  for all $t\in (0,1]$ and $0<h\leq t$,
  \begin{align}\label{X sup L}
    &(1-h)h^{1/d} \nonumber\\
    &\leq \int_{0}^{1-h}\sup_{r,\tau\in[0,1]}\|f(hr+s)-f(h\tau+s)\|\sup_{x\in \R^d}|L(x,[s,s+h])|^{1/d}ds  \nonumber\\
     &\leq \|s\mapsto\sup_{r,\tau\in[0,1]}\|f(hr+s)-f(h\tau+s)\|\|_{L^p(I(h),\R)}  \nonumber\\
     & \qquad\qquad\qquad\qquad\times \left\{\int_{0}^{1-h}\sup_{x\in \R^d}|L(x,[s,s+h])|^{\frac{p}{d(p-1)}}ds\right\}^{\frac{p-1}{p}}.
  \end{align}
  By the same calculations as above we get for all $t\in (0,1]$ and $-t<h<0$
    \begin{align}\label{X sup L1}
    &(1-|h|)|h|^{1/d} \nonumber\\
     &\leq \|s\mapsto\sup_{r,\tau\in[0,1]}\|f(hr+s)-f(h\tau+s)\|\|_{L^p(I(h),\R)}  \nonumber\\
     & \qquad\qquad\qquad\qquad\times \left\{\int_{-h}^{1}\sup_{x\in \R^d}|L(x,s)-L(x,s+h)|^{\frac{p}{d(p-1)}}ds\right\}^{\frac{p-1}{p}}.
  \end{align}
  According to \eqref{X sup L}, \eqref{X sup L1}, and \eqref{modulus continuity Besov Adler} we get for all $t\in (0,\frac{1}{2})$,
  \begin{align}\label{X sup L2}
    \frac{t^{1/d}}{2}
     &\leq c\, t^{\mu/d} \sup_{|h|\leq t}\|s\mapsto\sup_{r,\tau\in[0,1]}\|f(hr+s)-f(h\tau+s)\|\|_{L^p(I(h),\R)}.
  \end{align}
  Therefore,
  $$0<\lim_{t\to 0^+}t^{-(1-\mu)/d}\sup_{|h|\leq t}\|s\mapsto\sup_{r,\tau\in[0,1]}\|f(hr+s)-f(h\tau+s)\|\|_{L^p(I(h),\R)}.$$
  Then Theorem \ref{thm equivalance norm} (iv) and Theorem \ref{B0 equaal b0}  conclude the proof of Theorem \ref{Adler Besov}.
  \end{proof}
  \begin{proof}[Proof of Theorem \ref{thm non Besov X}]
  %It is easy to say that for $1\leq r\leq p<\infty$ we get $\|Y\|_{L^r}\leq \|Y\|_{L^p}$. On the other %hand,we know that
 For any $1<p<\infty$, let $n=n(p)$ be a positive integer such that $d^n\geq \frac{p}{p-1}$. We have almost surely,
  \begin{align*}
     & \sup_{0<t\leq 1}t^{-(1-\alpha d)/d}\sup_{|h|\leq t}\|s\mapsto \sup_{x\in \R^d}|L(x,s+h)-L(x,s)|^{\frac{1}{d}}\|_{L^{\frac{p}{(p-1)}}(I(h);\R)} \\
     &\leq \sup_{0<t\leq 1}t^{-(1-\alpha d)/d}\sup_{|h|\leq t}\|s\mapsto \sup_{x\in \R^d}|L(x,s+h)-L(x,s)|\|^{\frac{1}{d}}_{L^{d^{n-1}}(I(h);\R)} \\
  \end{align*}
  Hence Theorem \ref{thm Besov regularity of L} and \ref{Adler Besov} finish the proof of Theorem \ref{thm non Besov X}.
\end{proof}
\begin{proof}[Proof of Corollary \ref{cor non Besov X p q}]
  It is a consequence of Theorem \ref{thm non Besov X} and the injections \eqref{injection Besov}.
\end{proof}
\section{Examples}\label{Examples}
\subsection{The Gaussian case}
Let $Y^0=(Y^0_t)_{t\in [0,1]}$ be a real-valued continuous centred Gaussian process, with $Y(0)=0$, that satisfies the classical local nondeterminism (LND) property on $[0,1]$. By \cite[Lemma 2.3]{Berman73} we have for any $m\geq2$, there exist two positive constants $c_m$ and $\varepsilon$ such that for every ordered points $0=t_0\leq t_1<\cdots<t_m\leq 1$  with $t_m-t_1<\varepsilon$, and $(v_1,\cdots,v_m)\in \R^m\setminus\{0\}$,
  \begin{equation}\label{LND}
    \var\left(\sum_{j=1}^{m}v_j(Y^0_{t_j}-Y^0_{t_{j-1}})\right)\geq c_m \sum_{j=1}^{m}v^2_j\var\left(Y^0_{t_j}-Y^0_{t_{j-1}}\right).
  \end{equation}
  %that there exist $\alpha\in (0,1)$ and some positive constants $c,C$, such that for every $s,t\in [0,1]$ with $s<t$,
  %\begin{equation}\label{minoration var}
   % c(t-s)^{2\alpha}\leq\var\left(Y^0_{t}-Y^0_{s}\right)\leq C(t-s)^{2\alpha}.
  %\end{equation}
 Assume also that $Y^0$ verifies \eqref{minoration var}, with some $\alpha\in (0, 1)$. Define $Y_t=(Y^1_t,\cdots,Y^d_t),$ where $Y^1,\cdots,Y^d$ are independent copies of $Y^0$. Then $Y$ is $\alpha$-LND on $[0,1]$. The following theorem is a consequence of Theorem \ref{thm Besov regularity of L} and \ref{thm non Besov X}, and Corollary \ref{cor non Besov X p q}.
\begin{thm}\label{non Besov Gaussian}
  Let $Y^0=(Y^0_t)_{t\in [0,1]}$ be a real-valued continuous centered Gaussian process, with $Y(0)=0$, that satisfies the classical local nondeterminism (LND) property on $[0,1]$ and inequalities \eqref{minoration var} with $0<\alpha<\frac{1}{d} $. Let $Y^1,\cdots,Y^d$  be independent copies of $Y^0$ and put $Y_t=(Y^1_t,\cdots,Y^d_t)$.  Denote by $L(x,t)$ the jointly continuous version of the local time of $Y$, Therefore
    \begin{equation}\label{Gaussian Besov LT}
    \mathbb{P}\left[L(x,\bullet)\in \mathbf{B}^{1-d\alpha}_{p,\infty}(I;\R)\,,\;\text{ for all } x\in \R^d \text{ and } p\in [1,\infty)\right]=1;\qquad\;
  \end{equation}
  \begin{equation}\label{Gaussian non Besov process}
    \mathbb{P}\left[Y(\bullet)\in \mathbf{B}^{\alpha,0}_{p,\infty}(I,\R^d)\,,\;\text{ for some } p\in(1/\alpha,\infty)\right]=0;\qquad\qquad\qquad
  \end{equation}
   \begin{equation}\label{Gaussian non Besov process p q}
    \mathbb{P}\left[Y(\bullet)\in \mathbf{B}^{\alpha}_{p,q}(I,\R^d)\,,\;\text{ for some } p\in(1/\alpha,\infty) \text{ and } q\in [1,\infty)\right]=0,
  \end{equation}
  where $I=[0,1]$, $L(x,\bullet):t\in I\mapsto L(x,t)\in\R$, and $Y(\bullet):t\in I\mapsto Y_t\in \R^d$.
\end{thm}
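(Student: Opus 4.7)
The plan is to reduce the statement to a direct application of Theorem \ref{thm Besov regularity of L}, Theorem \ref{thm non Besov X}, and Corollary \ref{cor non Besov X p q}. To invoke those results I need to verify two hypotheses for the vector process $Y=(Y^1,\ldots,Y^d)$: the $\alpha$-LND property of Definition \ref{alphha LND} on $[0,1]$, and the moment bound \textbf{H} in \eqref{moment leq} for some $p_0>1/\alpha$. Continuity and $Y(0)=0$ are immediate from the assumptions on $Y^0$.

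For the $\alpha$-LND verification I would use the independence of the coordinates $Y^1,\ldots,Y^d$ together with the Gaussian form of the characteristic function to factor
\[
\left|\E\!\left[e^{i\sum_{j=1}^{m}\langle v_j,Y_{t_j}-Y_{t_{j-1}}\rangle}\right]\right|=\prod_{l=1}^{d}\exp\!\left(-\tfrac{1}{2}\var\!\left(\sum_{j=1}^{m}v_{j,l}(Y^0_{t_j}-Y^0_{t_{j-1}})\right)\right).
\]
Then, combining the classical LND inequality \eqref{LND} with the lower variance bound in \eqref{minoration var}, each exponent is controlled from below by $\tfrac{c_m c}{2}\sum_{j=1}^{m}v_{j,l}^{2}(t_j-t_{j-1})^{2\alpha}$ for $t_m-t_1<\varepsilon$. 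Applying the elementary estimate $e^{-x}\le C_k x^{-k}$ ($x>0$, $k\ge 0$) to each coordinate factor with any prescribed exponents $k_{j,l}$ yields the bound
\[
\left|\E\!\left[e^{i\sum_{j=1}^{m}\langle v_j,Y_{t_j}-Y_{t_{j-1}}\rangle}\right]\right|\le\frac{c}{\prod_{j,l}|v_{j,l}|^{k_{j,l}}(t_j-t_{j-1})^{\alpha k_{j,l}}},
\]
which is exactly inequality \eqref{Ineq alphha LND}. This is item 2 of Remark \ref{rem alphha LND} carried out explicitly.

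For hypothesis \textbf{H}, I would use the upper bound in \eqref{minoration var} together with the Gaussian moment formula: for any $p_0\ge 1$,
\[
\E\!\left[\|Y_t-Y_s\|^{p_0}\right]\le K_{p_0,d}\sum_{l=1}^{d}\E\!\left[|Y^l_t-Y^l_s|^{p_0}\right]\le K'_{p_0,d}\,\var(Y^0_t-Y^0_s)^{p_0/2}\le K|t-s|^{p_0\alpha}.
\]
Since $\alpha<1/d<1$, one can freely choose $p_0>1/\alpha$, so \eqref{moment leq} holds.

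With both hypotheses in hand, \eqref{Gaussian Besov LT} follows from Theorem \ref{thm Besov regularity of L}, \eqref{Gaussian non Besov process} follows from Theorem \ref{thm non Besov X}, and \eqref{Gaussian non Besov process p q} follows from Corollary \ref{cor non Besov X p q}. The only mildly delicate step is the $\alpha$-LND derivation, where care is needed when the exponents $k_{j,l}$ are zero (in which case the corresponding factor is trivially bounded by $1$) and to keep the constant $c$ uniform over the prescribed choice of $(k_{j,l})$; this is handled by applying $e^{-x}\le C_k x^{-k}$ separately on each coordinate index where $k_{j,l}>0$ and using $e^{-x}\le 1$ elsewhere.
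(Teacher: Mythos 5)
Your proposal is correct and follows essentially the same route as the paper: the paper reduces the theorem to Theorem \ref{thm Besov regularity of L}, Theorem \ref{thm non Besov X}, and Corollary \ref{cor non Besov X p q} after noting (item 2 of Remark \ref{rem alphha LND}) that the classical LND plus the lower bound in \eqref{minoration var} give the $\alpha$-LND property, while the upper bound in \eqref{minoration var} and Gaussianity give \textbf{H}; you simply carry out these verifications explicitly (with the only cosmetic point being that the exponent fed into $e^{-x}\le C_\beta x^{-\beta}$ should be $k_{j,l}/2$ so as to produce $|v_{j,l}|^{-k_{j,l}}(t_j-t_{j-1})^{-\alpha k_{j,l}}$).
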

A particular example is $ Y^0=B^{H,K} $ a bifractional Brownian motion with $H\in (0,1)$ and $K\in (0,1]$;
that is a real-valued centred Gaussian process, starting from zero, with covariance function
$$\E\left(B^{H,K}_t \, B^{H,K}_s\right)=\frac{1}{2^K}\left[(t^{2H} + s^{2H})^K -\vert t-s\vert^{2HK}
\right]\,.$$
Notice that the case $K=1$ corresponds to the fractional Brownian motion with Hurst parameter $H\in (0,1) $.
From  \cite[Lemma 3.3]{ait2019continuity} we know that the bifractional Brownian motion is LND and by \cite[Eq. (1)]{ait2019continuity} the Hypothesis \textbf{H} holds with $\alpha= HK$. Therefore, The below corollary is a consequence of Theorem \ref{non Besov Gaussian}.
\begin{cor}
  Let $(B^{H,K}_t)_{t\in [0,1]}$ be a $d$-dimensional bifractional Brownian motion with $H\in (0,1)$ and $K\in (0,1]$, s.t. $HK<\frac{1}{d}$. Denote by $L(x,t)$ the jointly continuous version of the local time of $B^{H,K}$, Therefore
    \begin{equation}\label{bifractional Besov LT}
    \mathbb{P}\left[L(x,\bullet)\in \mathbf{B}^{1-dHK}_{p,\infty}(I;\R)\,,\;\text{ for all } x\in \R^d \text{ and } p\in [1,\infty)\right]=1;
  \end{equation}
  \begin{equation}\label{bifractional non Besov process}
    \mathbb{P}\left[B^{H,K}(\bullet)\in \mathbf{B}^{HK,0}_{p,\infty}(I,\R^d)\,,\;\text{ for some } p\in(1/HK,\infty)\right]=0;\quad\,
  \end{equation}
   \begin{equation*}
    \mathbb{P}\left[B^{H,K}(\bullet)\in \mathbf{B}^{HK}_{p,q}(I,\R^d)\,,\;\text{ for some } p\in(1/HK,\infty) \text{ and } q\in [1,\infty)\right]=0,
  \end{equation*}
  where $I=[0,1]$, $L(x,\bullet):t\in I\mapsto L(x,t)\in\R$, and $B^{H,K}(\bullet):t\in I\mapsto B^{H,K}_t\in \R^d$.
\end{cor}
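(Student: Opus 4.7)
The plan is to recognize this corollary as a direct application of Theorem \ref{non Besov Gaussian}: all three conclusions \eqref{bifractional Besov LT}, \eqref{bifractional non Besov process}, and the third assertion follow verbatim from \eqref{Gaussian Besov LT}, \eqref{Gaussian non Besov process}, and \eqref{Gaussian non Besov process p q} once we verify that the coordinate process $B^{H,K}$ (one-dimensional, with Hurst parameters $H\in(0,1)$, $K\in(0,1]$) falls within the framework of Theorem \ref{non Besov Gaussian} with exponent $\alpha = HK$, and that $\alpha < 1/d$ (which is the standing assumption $HK<1/d$).

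The verification proceeds in three short steps. First, the one-dimensional $B^{H,K}$ is a continuous, centred Gaussian process with $B^{H,K}_0 = 0$ by definition. Second, the classical local nondeterminism on $[0,1]$ is supplied by \cite[Lemma 3.3]{ait2019continuity}. Third, one must check the two-sided variance bound \eqref{minoration var}, namely
\begin{equation*}
c(t-s)^{2HK}\;\leq\;\var\bigl(B^{H,K}_t - B^{H,K}_s\bigr)\;\leq\;C(t-s)^{2HK},\qquad 0\leq s<t\leq 1 .
\end{equation*}
The upper bound is precisely \cite[Eq.~(1)]{ait2019continuity} (which simultaneously establishes hypothesis \textbf{H} with $p_0$ arbitrarily large, via Gaussianity and moment equivalence). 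The lower bound follows from the same reference, or alternatively from the explicit incremental variance
\begin{equation*}
\E\bigl[(B^{H,K}_t - B^{H,K}_s)^2\bigr] \;=\; t^{2HK}+s^{2HK}-2^{1-K}(t^{2H}+s^{2H})^{K}+2^{1-K}(t-s)^{2HK},
\end{equation*}
from which a direct analysis of the first three terms shows that their contribution is non-negative, so the final $2^{1-K}(t-s)^{2HK}$ term already furnishes the lower estimate with $c=2^{1-K}$.

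Having verified the hypotheses for the one-dimensional component, the $d$-dimensional $B^{H,K}=(B^{H,K,1},\ldots,B^{H,K,d})$ is built from independent copies as required in Theorem \ref{non Besov Gaussian}, so that theorem applies with $\alpha=HK\in(0,1/d)$. The three displays in the corollary are then immediate restatements of \eqref{Gaussian Besov LT}--\eqref{Gaussian non Besov process p q} with this choice of $\alpha$. Since every ingredient is either cited or a one-line Gaussian computation, there is no genuine obstacle: the only mild technical point is tracking the two-sided variance bound cleanly from the explicit covariance, and this is routine.
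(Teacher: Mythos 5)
Your main line of argument is exactly the paper's: cite \cite[Lemma 3.3]{ait2019continuity} for the LND property of the one-dimensional $B^{H,K}$, cite \cite[Eq.~(1)]{ait2019continuity} for the two-sided bound \eqref{minoration var} (hence, by Gaussianity, hypothesis \textbf{H} with $\alpha=HK$ and any $p_0>1/(HK)$), and then invoke Theorem \ref{non Besov Gaussian} with $\alpha=HK<1/d$; this is correct and is all the paper itself does. However, your ``alternative'' self-contained justification of the lower bound is wrong: from the covariance one indeed gets
\begin{equation*}
\E\bigl[(B^{H,K}_t-B^{H,K}_s)^2\bigr]=t^{2HK}+s^{2HK}-2^{1-K}\bigl(t^{2H}+s^{2H}\bigr)^{K}+2^{1-K}(t-s)^{2HK},
\end{equation*}
but by concavity of $x\mapsto x^{K}$ on $[0,\infty)$ one has $\bigl(t^{2H}+s^{2H}\bigr)^{K}\geq 2^{K-1}\bigl(t^{2HK}+s^{2HK}\bigr)$, so the first three terms are \emph{non-positive} (strictly negative for $K<1$, $s\neq t$), not non-negative as you claim; consequently the asserted lower bound with constant $c=2^{1-K}$ is false (e.g.\ $H=K=1/2$, $s=0$, $t=1$ gives variance $1<2^{1/2}$). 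The correct elementary lower bound is the Houdr\'e--Villa quasi-helix estimate $\var\bigl(B^{H,K}_t-B^{H,K}_s\bigr)\geq 2^{-K}|t-s|^{2HK}$, whose proof is more delicate than your one-line claim; since any positive constant suffices for \eqref{minoration var}, simply rely on the cited inequality (as the paper does) rather than on this flawed computation.
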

\begin{rem}
A different approach, based on characterization of Besov spaces in terms of sequences spaces, has been used in \cite{boufoussi2020besov} to investigate \eqref{bifractional non Besov process} for $d=1$.
However, to the best of our knowledge, the uniform Besov regularity results for the local times of Gaussian processes given by \eqref{Gaussian Besov LT} and \eqref{bifractional Besov LT} are new and have never been considered in the literature.
\end{rem}
\subsection{The non-Gaussian case}
Let us consider the following system of non-linear stochastic heat equations
\begin{equation}\label{SHE}
  \frac{\partial u_k}{\partial t}(t,x) = \frac{\partial^2 u_k}{\partial x^2}(t,x)+b_k(u(t,x))+\sum_{l=1}^{d}\sigma_{k,l}(u(t,x))\dot{W}^l(t,x),
\end{equation}
with Neumann boundary conditions
$$u_k(0,x)=0,\qquad \frac{\partial u_k(t,0)}{\partial x}=\frac{\partial u_k(t,1)}{\partial x}=0,$$
for $t\in [0,T]$, $1\leq k\leq d$, $x\in [0,1]$, where $u:=(u_1,\cdots,u_d)$. Let $\dot{W}=(\dot{W}^1,\cdots,\dot{W}^d)$ be a vector of
$d$-independent space-time white noises on $[0,T]\times [0,1]$.
Set $b=(b_k)_{1\leq k\leq d} $ and $\sigma=(\sigma_{k,l})_{1\leq k, l\leq d}$. We consider the below hypotheses on the coefficients $\sigma_{k,l}$ and $b_k$:\\
\textbf{A1}\; For all $1\leq k,l\leq d$,  $\sigma_{k,l}$ and $b_k$ are  bounded and infinitely differentiable functions with their partial derivatives of all orders are bounded.\\
\textbf{A2}\; The matrix $\sigma$ is uniformly elliptic, i.e. there exists $\rho>0$ such that for all $x\in \R^d$ and $y\in \R^d$ with $\|y\|=1$, we get $\|\sigma(x)y\|^2\geq \rho^2$ (where $\|\cdot\|$ is the Euclidean norm on $\R^d$).

Following Walsh \cite{walsh1986introduction}, a mild solution of \eqref{SHE} is a jointly measurable $\R^d$-valued process $u=(u_1,\cdots,u_d)$ such that for any $k\in \{1,\cdots,d\}$, $t\in [0,T]$, and $x\in [0,1]$,
\begin{equation}\label{solution SHE}
  \begin{split}
     u_k(t,x) &= \int_{0}^{t}\int_{0}^{1}G_{t-r}(x,v)\sum_{l=1}^{d}\sigma_{k,l}(u(r,v))W^l(dr,dv) \\
       &\qquad + \int_{0}^{t}\int_{0}^{1}G_{t-r}(x,v)b_k(u(r,v)) dv\, dr.
  \end{split}
\end{equation}

According to \cite[Theorem 5.4]{boufoussi2021local}, we know  that the solution to the system of non-linear stochastic heat equations, $\eqref{solution SHE}$, is $\frac{1}{4}$-LND, and by \cite[Eq. (2.12)]{boufoussi2021local} the hypothesis \textbf{H} holds with $\alpha=\frac{1}{4}$. Hence, Theorem \ref{thm Besov regularity of L} and \ref{thm non Besov X}, and Corollary \ref{cor non Besov X p q} give the following:
\begin{thm}
  Let $u$ be given by \eqref{solution SHE}.  Assume that $d\leq3$ and denote by $L(\xi,t)$ the jointly continuous version of the local time of the process $\{u(t,x)\,,\,\, t\in[0,T]\}$ for $x$ being fixed in $(0,1)$.Then
    \begin{equation}\label{nonlinear SHE Besov LT}
    \mathbb{P}\left[L(\xi,\bullet)\in \mathbf{B}^{1-d/4}_{p,\infty}(I;\R)\,,\;\text{ for all } \xi\in \R^d \text{ and } p\in [1,\infty)\right]=1;
  \end{equation}
  \begin{equation}\label{nonlinear SHE Besov process}
    \mathbb{P}\left[u(\bullet,x)\in \mathbf{B}^{1/4,0}_{p,\infty}(I,\R^d)\,,\;\text{ for some } p\in(4,\infty)\right]=0;\qquad\qquad\,
  \end{equation}
   \begin{equation}\label{nonlinear SHE Besov process p q}
    \mathbb{P}\left[u(\bullet,x)\in \mathbf{B}^{1/4}_{p,q}(I,\R^d)\,,\;\text{ for some } p\in(4,\infty) \text{ and } q\in [1,\infty)\right]=0,
  \end{equation}
  where $I=[0,1]$, $L(\xi,\bullet):t\in I\mapsto L(\xi,t)\in\R$, and $u(\bullet,x):t\in I\mapsto u(t,x)\in \R^d$.
\end{thm}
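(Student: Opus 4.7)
The plan is to recognize that this statement is a direct corollary of Theorem \ref{thm Besov regularity of L}, Theorem \ref{thm non Besov X}, and Corollary \ref{cor non Besov X p q}, applied to the one-parameter process $X_t := u(t,x)$ obtained by fixing the spatial variable $x \in (0,1)$. Thus the proof reduces to checking two hypotheses for $X$: first, that $X$ is $\alpha$-LND with $\alpha = 1/4$ and $\alpha < 1/d$, and second, that $X$ verifies hypothesis \textbf{H} with the same $\alpha$.

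For the $\alpha$-LND property, I would invoke \cite[Theorem 5.4]{boufoussi2021local}, which establishes precisely the $\frac{1}{4}$-LND condition of Definition \ref{alphha LND} for the mild solution of the system \eqref{SHE} under assumptions \textbf{A1} (smoothness and boundedness of $b$, $\sigma$ with all partial derivatives bounded) and \textbf{A2} (uniform ellipticity of $\sigma$). The constraint $\alpha = 1/4 < 1/d$ translates to $d < 4$, i.e., $d \in \{1,2,3\}$, which is exactly the assumption $d \leq 3$ in the statement. For hypothesis \textbf{H}, I would cite \cite[Eq.~(2.12)]{boufoussi2021local}, which provides the moment estimate $\E[\|u(t,x)-u(s,x)\|^{p_0}] \leq K|t-s|^{p_0/4}$ for all sufficiently large $p_0$ (in particular one can choose $p_0 > 4 = 1/\alpha$, as required in \textbf{H}).

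Once these two conditions are verified, the three conclusions follow immediately: applying Theorem \ref{thm Besov regularity of L} with $\alpha = 1/4$ yields \eqref{nonlinear SHE Besov LT}; applying Theorem \ref{thm non Besov X} with $\alpha = 1/4$ yields \eqref{nonlinear SHE Besov process}; and applying Corollary \ref{cor non Besov X p q} yields \eqref{nonlinear SHE Besov process p q}. Since the genuine analytic content (the $\alpha$-LND and the moment estimate for the solution) is already established in \cite{boufoussi2021local}, and the passage from these inputs to the Besov conclusions is handled by the general theorems proved earlier in this paper, there is no additional obstacle; the proof is essentially a verification of hypotheses followed by a quotation of the master theorems.
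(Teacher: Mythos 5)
Your proposal is correct and follows exactly the paper's own argument: invoke \cite[Theorem 5.4]{boufoussi2021local} for the $\tfrac{1}{4}$-LND property and \cite[Eq.~(2.12)]{boufoussi2021local} for hypothesis \textbf{H} with $\alpha=\tfrac14$ (with $d\leq 3$ giving $\alpha<1/d$), then apply Theorem \ref{thm Besov regularity of L}, Theorem \ref{thm non Besov X}, and Corollary \ref{cor non Besov X p q}. No gaps or deviations to report.
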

\begin{rem}
In \cite{boufoussi2020besov1}, we have studied, for $d=1$ and $\sigma=1$, i.e., $u(t,x)$ is the solution to the linear stochastic heat equation, by a different method the following:
\begin{equation*}
    \mathbb{P}\left[u(\bullet,x)\in \mathbf{B}^{1/4,0}_{p,\infty}(I,\R)\right]=0.
  \end{equation*}
 However, \eqref{nonlinear SHE Besov LT} is new even in the linear case.
\end{rem}

\bibliographystyle{emss}
\bibliography{biblio}

\end{document}